\date{\today}
\newtheorem{theorem}{Theorem}[section]
\newtheorem{lemma}[theorem]{Lemma}
\theoremstyle{definition}
\theoremstyle{remark}
\numberwithin{equation}{section}
\newcommand{\hyper}[5]{\,{}_{#1}F_{#2}\left(\!\!%
\begin{array}{cc}{\displaystyle{#3}}\\[-0.1ex]
{\displaystyle{#4}} \end{array}\Big| \,{\displaystyle{#5}}
\right)}
\begin{document}

\title[Fourier transforms of some special functions]{Fourier transforms of
orthogonal polynomials on the cone}

\author[R. Akta\c{s} Karaman]{Rab\.{I}a  Akta\c{s} Karaman}
\address[R. Akta\c{s} Karaman]{Ankara University, Faculty of Science, Department of Mathematics, 06100, Tando\u{g}an, Ankara, Turkey}
\email[R. Akta\c{s} Karaman]{raktas@science.ankara.edu.tr}

\author[I. Area]{Iv\'an Area}
\address[I. Area]{IFCAE, Universidade de Vigo,
		Departamento de Matem\'atica Aplicada II,
              E. E. Aeron\'au\-tica e do Espazo,
              Campus As Lagoas s/n,
              32004 Ourense, Spain}
\email[I. Area]{area@uvigo.gal}

\subjclass[2020]{Primary 33C50; 33C70; 33C45  \ Secondary 42B10}

\keywords{Gegenbauer polynomials; Laguerre polynomials; Multivariate orthogonal polynomials; Hahn polynomials; Fourier transform; Parseval's identity; Hypergeometric function}
\begin{abstract}
The purpose of this paper is to obtain Fourier transforms of multivariate orthogonal polynomials on the cone such as Laguerre polynomials on the cone and Jacobi polynomials on the cone and to define two new families of multivariate orthogonal functions by using  Parseval's identity. Also, the obtained results are expressed in terms of the continuous Hahn polynomials.
\end{abstract}

\maketitle

\section{Introduction}

The study of orthogonal polynomials is rooted in classical mathematical analysis, with notable contributions from 19th-century mathematicians such as Lapace, Legendre, Jacobi, or  Chebyshev. The simplest example is the family of Legendre polynomials, which are usually introduced as orthogonal on the interval $[-1,1]$ with respect to a constant weight function. More generally, the weight function plays a critical role in defining the orthogonality condition, enabling the construction of families such as Chebyshev, Hermite, Laguerre and Jacobi (classical) polynomials, each suited to specific problems and applications. These univariate families have been extended in many directions. On the one hand, by considering another univariate weight functions, and on the other, by extending them to the multivariate situation(s). In the latter case, the domain of orthogonality plays a fundamental role.

\medskip
We shall focus this work in a combination of extension to the multivariate situation combined with certain integral transforms: fundamental mathematical techniques used to convert functions from one domain to another, often simplifying the analysis and solution of complex problems. By integrating a function with a kernel function, integral transforms reveal insights that might be less apparent in the function's original form. For instance, the very classical Fourier transform relates the time domain with the domain of frequency. These transforms have, therefore, interest from both theoretical and practical points of view. We just recall Fourier, Laplace, Beta, Hankel, Mellin and Whittaker transforms with various linked to special functions by considering appropriate kernels \cite{3}--\cite{21}.

\medskip
If we consider the simplest example of analyzing the Fourier transform of univariate orthogonal polynomials,  Hermite functions which are multiplied Hermite polynomials $H_{n}\left(  x\right)  $\ by $\exp \left(  -x^{2}/2\right)  $\ are eigenfunctions of Fourier transform \cite{7,8,9,19}. In \cite{9}, by the Fourier-Jacobi transform, it is investigated that classical Jacobi polynomials can be mapped onto Wilson polynomials. Also, Fourier transform of Jacobi polynomials and their close relation with continuous Hahn polynomials \cite{27} have been discussed by Koelink \cite{7}. The Fourier transforms of finite classical orthogonal polynomials $M_{n}^{\left(  p,q\right)  }\left( x\right)  $\ and $N_{n}^{\left(  p\right)  }\left(  x\right)  $ by Koepf and Masjed-Jamei \cite{8}, generalized ultraspherical and generalized Hermite polynomials and two symmetric sequences of finite orthogonal polynomials by Masjed-Jamei and Koepf \cite{11,21}, and Routh-Romanovski polynomials $J_{n}^{\left(  u,v\right)  }\left( x;a,b,c,d\right)  $ by Masjed-Jamei et al. \cite{20} have been studied.

\medskip
As for generalizations to the multivariate case of orthogonal polynomials, Tratnik \cite{Trat1, Trat2} has given
multivariable generalization both of all continuous and discrete families of the Askey tableau, providing hypergeometric representation, orthogonality weight function which applies with respect to subspaces of lower degree and biorthogonality within a given subspace. A non-trivial interaction for multivariable continuous Hahn polynomials has been presented by Koelink et al. \cite{Koe}. Moreover, in \cite{22,23,233,24} Fourier transforms of multivariate orthogonal polynomials and their applications have been investigated. Some families of orthogonal functions in terms of continuous Hahn polynomials have been obtained. In particular, in \cite{233} a new family of orthogonal functions have been derived by use of the Fourier transforms of the multivariate orthogonal polynomials on the unit ball and the Parseval identity.

\medskip
We would like also to pay special attention to orthogonal polynomials in a conic domain. In \cite{X20} Xu studied orthogonal polynomials and the Fourier orthogonal series on a cone in ${\mathbb{R}}^{d+1}$, showing that orthogonal polynomials on the cone are eigenfunctions of a second order partial differential operator. Later he presented the construction of semi-discrete localized tight frame in weighted $L^{2}$ norm and characterization of best approximation by polynomials on conic domains \cite{X21a}. More recently, Xu also studied orthogonal expansions with respect to the Laguerre type weight functions on the conic surface of revolution and the domain bounded by such a surface \cite{X21b}. We would like to remark that very recently two families of orthogonal polynomials in the cone have been studied in \cite{ABFX}.

\medskip
Motivated by some of these aforementioned investigations of the Fourier transforms of orthogonal polynomials, the main aim of this work is to study the Fourier transforms of the multivariate orthogonal polynomials on a conic domain.

\medskip
The paper is organized as follows. The next section is preliminary, where we recall the results of classical orthogonal polynomials on the unit ball and on the cone. In the third section, we obtain the Fourier transforms of the multivariate orthogonal polynomials on the cone ${\mathbb{V}}^{d+1}$ in terms of continuous Hahn polynomials. We first state the results for $d=1$ to illustrate the results and illuminate how the results on ${\mathbb{V}}^{d+1}$  are obtained, then we give the results on the cone ${\mathbb{V}}^{d+1}$ by induction. Finally, by use of Fourier transforms given here and Parseval's identity, new families of orthogonal functions are investigated.

\section{Preliminaries}\label{sec:bdn}
In this section, we state background materials on orthogonal polynomials that we shall need. We recall the basic results on the classical multivariate orthogonal polynomials on the unit ball and on the cone.

\subsection{Multivariate Orthogonal Polynomials}
In what follows we shall use multi-index notation (see \cite{29}):  ${\mathbf{k}}=(k_{1},\ldots,k_{d})\in{\mathbb{N}}_{0}^{d}$ and ${\boldsymbol{x}}=(x_{1},\ldots,x_{d})\in{\mathbb{R}}^{d}$.

Let ${\mathsf{w}}$ be a weight function on a domain $\Omega \subset{\mathbb{R}%
}^{d}$. Let ${\langle}\cdot,\cdot{\rangle}_{{\mathsf{w}}}$ be the inner
product defined by
\[
{\langle}f,g{\rangle}_{{\mathsf{w}}}=\int_{\Omega}f(\boldsymbol{x})g(\boldsymbol{x}){\mathsf{w}%
}(x)\mathrm{d}\boldsymbol{x}
\]
where $d\boldsymbol{x}=\mathrm{d}x_{1}\cdots \mathrm{d}x_{d}.$ Let $\Pi_{n}^{d}$ denote the space of polynomials of degree at most $n$ in $d$
variables. A polynomial~$P$ of degree $n$ is an orthogonal polynomial with
respect to the inner product~if
\[
{\langle}P,Q{\rangle}_{{\mathsf{w}}}=0,\qquad \forall Q\in \Pi_{n-1}^{d}.
\]
Assume that ${\mathsf{w}}$ is regular so that orthogonal polynomials with
respect to the inner product are well-defined. Let ${\mathcal{V}}_{n}%
^{d}(\Omega,{\mathsf{w}})$ be the space of orthogonal polynomials of degree
$n$ in $d$ variables with respect to the inner product. It is known that (see \cite{29})
\[
r_{n}^{d}:=\dim{\mathcal{V}}_{n}^{d}(\Omega,{\mathsf{w}})=\binom{n+d-1}%
{n},\qquad n=0,1,2,\ldots.
\]
For $d>1$, the space ${\mathcal{V}}_{n}^{d}(\Omega,{\mathsf{w}})$ contains
infinitely many bases. Moreover, since orthogonality is defined as orthogonal
to all polynomials of lower degrees, elements of a basis may not be mutually
orthogonal. A basis $\{P_{j}^{n}:1\leq j\leq r_{n}^{d}\}$ of ${\mathcal{V}%
}_{n}^{d}(\Omega,{\mathsf{w}})$ is called an orthogonal basis if ${\langle
}P_{j}^{n},P_{k}^{n}{\rangle}_{{\mathsf{w}}}=0$ whenever $j\neq k$.

\subsection{Orthogonal polynomials on the unit ball}
Let  $\left \Vert \boldsymbol{x}\right \Vert :=\left(  x_{1}^{2}+\cdots+x_{d}^{2}\right)  ^{1/2}$ for $\boldsymbol{x=}\left(  x_{1},\dots,x_{d}\right)  \in \mathbb{R}^{d}$. For $\mu>-\frac{1}{2}$, the classical weight function on the unit ball
${\mathbb{B}}^{d}=\{\boldsymbol{x}\in{\mathbb{R}}^{d}:\Vert \boldsymbol{x}\Vert \leq1\}$ of ${\mathbb{R}^{d}
}$ is (see \cite{29})
\[
{\mathsf{w}}_{\mu}(\boldsymbol{x})=(1-\Vert \boldsymbol{x}\Vert^{2})^{\mu-\frac{1}{2}},\quad \mu
>-\tfrac{1}{2},\quad \boldsymbol{x}\in{\mathbb{B}}^{d}.
\]

When $d=1$, the ball is $[-1,1]$ and the associated orthogonal polynomials are
the classical Gegenbauer polynomials $C_{n}^{\mu}$ and they are defined by hypergeometric function
$_{2}F_{1}$ as \cite[p. 277, Eq. (4)]{25}%
\begin{equation}\label{hyper}
C_{n}^{\left(  \mu \right)  }\left(  x\right)  =\frac{\left(
2\mu\right)  _{n}}{n!} \,  \hyper{2}{1}{-n,n+2\mu}{\mu+\frac{1}{2}}{\frac{1-x}{2}},
\end{equation}
where the Gauss hypergeometric function $_{2}F_{1}$ is the special case $p=2,~q=1$ of the $\left(  p,q\right)  $ generalized hypergeometric function
(cf. \cite{26})%
\begin{equation}
\hyper{p}{q}{a_{1},\ a_{2},\dots,a_{p}}{b_{1},\ b_{2},\dots,b_{q}}{x}=
{\displaystyle \sum \limits_{n=0}^{\infty}}
\frac{\left(  a_{1}\right)  _{n}\left(  a_{2}\right)  _{n} \dots \left(
a_{p}\right)  _{n}}{\left(  b_{1}\right)  _{n}\left(  b_{2}\right)
_{n}\dots \left(  b_{q}\right)  _{n}}\frac{x^{n}}{n!}\label{genhyper}%
\end{equation}
where  $\left(  \alpha \right)  _{n}=\alpha \left(  \alpha+1\right) \cdots. \left(
\alpha+n-1\right)  ~,~n\geq1$ , $\left(  \alpha \right)  _{0}=1$ denotes the
Pochhamber symbol.
The Gegenbauer polynomials satisfy the following orthogonality relation \cite[p.281, Eq. (28)]{25}%
\begin{equation}
 \int \limits_{-1}^{1}\left(  1-x^{2}\right)  ^{\mu-\frac{1}{2}}%
C_{n}^{\left(  \mu \right)  }\left(  x\right)  C_{m}^{\left(
\mu \right)  }\left(  x\right)  dx=h_{n}^{\mu}~\delta_{n,m}%
,\label{ort} \quad
 \left(  m,n\in \mathbb{N}_{0}:= \mathbb{N}. \cup \left \{  0\right \}  \right),
\end{equation}
where $h_{n}^{\mu}$ is given by%
\begin{equation}
h_{n}^{\mu}=\frac{\left(  2\mu \right)  _{n}\Gamma \left(  \mu
+\frac{1}{2}\right)  \Gamma \left(  \frac{1}{2}\right)  }{n!\left(
n+\mu \right)  \Gamma \left( \mu \right)  }\label{gnorm}%
\end{equation}
and $\delta_{n,m}$ is the Kronecker delta, and the Gamma function $\Gamma \left(  x\right)$ is defined by (cf. \cite{26})%
\begin{equation}
\Gamma \left(  x\right)  =\int \limits_{0}^{\infty}t^{x-1}e^{-t}dt,\  \Re \left(
x\right)  >0.\label{2}%
\end{equation}.

For $d>1$, the orthogonal polynomials with respect to ${\mathsf{w}}_{\mu}$ on
${\mathbb{B}}^{d}$ are well studied; see~\cite[Section 5.2]{29}. Let
${\mathcal{V}}_{n}^{d}({\mathbb{B}}^{d},{\mathsf{w}}_{\mu})$ be the space of OPs
for this weight function. There are many distinct bases for this space. We are
mostly interested in the Jacobi basis.
 The elements of the space ${\mathcal{V}}_{n}^{d}({\mathbb{B}}^{d},{\mathsf{w}}_{\mu})$ are eigenfunctions
of a second order partial differential equation \cite[p.141, Eq. (5.2.3)]{29}%
\begin{equation*}
 \sum \limits_{i=1}^{d}\frac{\partial^{2}P}{\partial x_{i}^{2}}-\sum
\limits_{j=1}^{d}\frac{\partial}{\partial x_{j}}x_{j}\left[  2\mu
-1+\sum \limits_{i=1}^{d}x_{i}\frac{\partial}{\partial x_{i}}\right]  P
 =-\left(  n+d\right)  \left(  n+2\mu-1\right)  P.
\end{equation*}
The space $\mathcal{V}_{n}^{d}$ has several different bases. One of orthogonal
bases of the space $\mathcal{V}_{n}^{d}$ is in terms of the Gegenbauer
polynomials $C_{n_{j}}^{\left(  \lambda_{j}\right)  }\left(  x_{j}\right)  $
as follows \cite[p.143]{29}%
\begin{equation}
P_{\mathbf{n}}^{\mu}\left(  \boldsymbol{x}\right)  =\prod \limits_{j=1}%
^{d}\left(  1-\left \Vert \boldsymbol{x}_{j-1}\right \Vert ^{2}\right)
^{\frac{n_{j}}{2}}C_{n_{j}}^{\left(  \lambda_{j}\right)  }\left(  \frac{x_{j}%
}{\sqrt{1-\left \Vert \boldsymbol{x}_{j-1}\right \Vert ^{2}}}\right) \label{P}%
\end{equation}
where $\lambda_{j}=\mu+\left \vert \mathbf{n}^{j+1}\right \vert +\frac{d-j}{2},$
and
\begin{equation}\label{notation}
\begin{cases}
\boldsymbol{x}_{0}   =0,\  \  \boldsymbol{x}_{j}=\left(  x_{1},\dots,x_{j}\right)  , \\
\mathbf{n}   =\left(  n_{1},\dots,n_{d}\right)  ,\  \text{\ }\left \vert
\mathbf{n}\right \vert =n_{1}+\cdots+n_{d}=n,\\
\mathbf{n}^{j}   =\left(  n_{j},\dots,n_{d}\right)  ,\text{ \ }\left \vert
\mathbf{n}^{j}\right \vert =n_{j}+\cdots+n_{d},\  \text{\  \ }1\leq j\leq d,
\end{cases}
\end{equation}
and $\mathbf{n}^{d+1}:=0.$ More precisely,%
\[
\int \limits_{\mathbb{B}^{d}}{\mathsf{w}}_{\mu}\left(  \boldsymbol{x}\right)
P_{\mathbf{n}}^{\mu}\left(  \boldsymbol{x}\right)  P_{\mathbf{m}}^{\mu}\left(
\boldsymbol{x}\right)  d\boldsymbol{x}=h_{\mathbf{n}}^{\mu}\delta_{\mathbf{n}%
,\mathbf{m}}
\]
where $\delta_{\mathbf{n},\mathbf{m}}=\delta_{n_{1},m_{1}}\cdots \delta
_{n_{d},m_{d}}$ and $h_{\mathbf{n}}^{\mu}$ is given by (cf. \cite{29})%
\begin{equation}
h_{\mathbf{n}}^{\mu}=\frac{\pi^{d/2}\Gamma \left(  \mu+\frac{1}{2}\right)
\left(  \mu+\frac{d}{2}\right)  _{\left \vert \mathbf{n}\right \vert }}%
{\Gamma \left(  \mu+\frac{d+1}{2}+\left \vert \mathbf{n}\right \vert \right)
}\prod \limits_{j=1}^{d}\frac{\left(  \mu+\frac{d-j}{2}\right)  _{\left \vert
\mathbf{n}^{j}\right \vert }\left(  2\mu+2\left \vert \mathbf{n}^{j+1}%
\right \vert +d-j\right)  _{n_{j}}}{n_{j}!\left(  \mu+\frac{d-j+1}{2}\right)
_{\left \vert \mathbf{n}^{j}\right \vert }}.\label{Norm}%
\end{equation}

\subsection{Orthogonal polynomials on the cone}

We recall orthogonal polynomials on the conic domain
\begin{equation}\label{eq:conedomain}
{\mathbb{V}}^{d+1}=\{(t,\boldsymbol{x})\in{\mathbb{R}}^{d+1}:\, \Vert \boldsymbol{x}\Vert \leq
t,\,\boldsymbol{x}\in{\mathbb{R}}^{d},\,t\geq0\},
\end{equation}
on ${\mathbb{R}}^{d+1}$ and the orthogonality is defined with respect to the weight function \cite{X20}
\[
W(t,\boldsymbol{x})=w(t)(t^{2}-\Vert \boldsymbol{x}\Vert^{2})^{\mu-\frac{1}{2}},\qquad \mu>-\tfrac{1}{2},
\]
where $w$ is a weight function defined on ${\mathbb{R}}_{+}=[0,\infty)$ and
the cone could be regarded as finite if $w$ is supported on the finite
interval, say, $[0,1]$. Orthogonal polynomials on the cone have only been studied in recent years. Two of the most significant cases are the Laguerre polynomials on the cone, with the weight function $w(t)=t^{{\beta}}{\mathrm{e}}^{-t}$,  and the Jacobi polynomials on the cone, with the weight function $w(t)=t^{{\beta}}(1-t)^{{\gamma}}$. For these cases, an orthonormal basis is defined in \cite{X20}, and it is shown that the orthogonal polynomials in ${\mathcal{V}}_{n}^{d+1}$ are eigenfunctions of a second-order differential operator. Additionally, they satisfy an addition formula, which provides a closed-form expression for the reproducing kernel of the space ${\mathcal{V}}%
_{n}^{d+1}$. This kernel is crucial for advancing approximation theory and computational harmonic analysis over the cone, as discussed in \cite{X21a, X21b}. Furthermore, the monomial basis and the basis defined by the Rodrigues formulas on the cone are studied in \cite{ABFX}.

Let $w$ be a weight function on an interval in ${\mathbb{R}}$. We can assume
that the interval is either $[0,1]$ or ${\mathbb{R}}_{+}=[0,\infty)$ without
losing generality. We define the weight function
\begin{equation}
W_{\mu}(t,\boldsymbol{x})=w(t)(t^{2}-\Vert \boldsymbol{x}\Vert^{2})^{\mu-\frac{1}{2}},\quad \mu
>-\tfrac{1}{2}\label{eq:Wmu-cone}%
\end{equation}
and define the inner product
\[
{\langle}f,g{\rangle}_{\mu}:=\int_{{\mathbb{V}}^{d+1}}f(t,\boldsymbol{x})g(t,\boldsymbol{x})W_{\mu
}(t,\boldsymbol{x})\mathrm{d}\boldsymbol{x}\mathrm{d}t
\]
on the cone ${\mathbb{V}}^{d+1}$. By using the separation of variables, the following equality is satisfied
\[
\int_{{\mathbb{V}}^{d+1}}f(t,\boldsymbol{x})W_{\mu}(t,\boldsymbol{x})\mathrm{d}\boldsymbol{x}\mathrm{d}t=\int
_{0}^{\infty}\int_{{\mathbb{B}}^{d}}f(t,t\boldsymbol{y})(1-\Vert \boldsymbol{y}\Vert^{2})^{\mu-\frac
{1}{2}}\mathrm{d}\boldsymbol{y}\,t^{d+2\mu-1}w(t)\mathrm{d}t.
\]
Let ${\mathcal{V}}_{n}({\mathbb{V}}^{d+1},W_{\mu})$ be the space of orthogonal
polynomials of degree $n$ in $d+1$ variables with respect to the inner product
${\langle}f,g{\rangle}_{\mu}$. A basis of this space can be presented in terms of
orthogonal polynomials on the unit ball and a family of orthogonal polynomials
in one variable as in the following lemma \cite{ABFX,X20}.

\begin{lemma}
\label{prop:OPcone} Let ${\mathbb{P}}_{m} = \{P_{\mathbf{k}}: |{\mathbf{k}}| =
m\}$ be a basis of ${\mathcal{V}}_{m}({\mathbb{B}}^{d}, {\mathsf{w}}_{\mu})$
for $m \le n$ and let $q_{n-m}^{\alpha}$ be an orthogonal polynomial in one
variable with respect to the weight function $t^{\alpha} w(t)$ on
${\mathbb{R}}_{+}$. Define
\begin{equation}
\label{eq:sQcone}{\mathsf{Q}}_{{\mathbf{k}},n} (t,\boldsymbol{x}) = q_{n-m}^{{\alpha}_{m}%
}(t) t^{m} P_{\mathbf{k}}\! \left( \frac{\boldsymbol{x}}{t}\right) , \qquad|{\mathbf{k}}|
=m, \quad0 \le m \le n,
\end{equation}
where ${\alpha}_{m} = d+2m+2 \mu-1$. Then ${\mathbb{Q}}_{n} = \{ {\mathsf{Q}%
}_{{\mathbf{k}},n}: |{\mathbf{k}}| =m, \, 0 \le m \le n\}$ is a basis of
${\mathcal{V}}_{n}({\mathbb{V}}^{d+1}, W_{\mu})$. In particular, if
${\mathbb{P}}_{m} $ is an orthogonal basis for ${\mathcal{V}}_{m}({\mathbb{B}%
}^{d}, {\mathsf{w}}_{\mu})$ for $0 \le m \le n$, then ${\mathbb{Q}}_{n}$ is an
orthogonal basis for ${\mathcal{V}}_{n}({\mathbb{V}}^{d+1}, W_{\mu})$.
\end{lemma}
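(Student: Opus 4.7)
The plan is to verify in turn: (i) each $\mathsf{Q}_{\mathbf{k},n}$ is a polynomial of total degree exactly $n$ in $(t,\boldsymbol{x})$; (ii) the family $\mathbb{Q}_n$ has the right cardinality and every member is orthogonal to $\Pi_{n-1}^{d+1}$, so $\mathbb{Q}_n$ is a basis of $\mathcal{V}_n(\mathbb{V}^{d+1},W_\mu)$; and (iii) under the orthogonality hypothesis on the $\mathbb{P}_m$, distinct members of $\mathbb{Q}_n$ are mutually orthogonal.

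For (i), I would note that $t^m P_{\mathbf{k}}(\boldsymbol{x}/t)$ is a homogeneous polynomial of degree $m$ in $(t,\boldsymbol{x})$, because clearing the $1/t^m$ denominator in the expansion of $P_{\mathbf{k}}$ leaves a polynomial; multiplying by $q_{n-m}^{\alpha_m}(t)$ of degree $n-m$ then yields total degree $n$. Counting the candidates gives $\sum_{m=0}^{n}\binom{m+d-1}{m}=\binom{n+d}{n}=\dim\mathcal{V}_n(\mathbb{V}^{d+1},W_\mu)$ by the hockey-stick identity, so once (ii) is established the basis claim is forced.

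The heart of the proof is the orthogonality in (ii). I would pair $\mathsf{Q}_{\mathbf{k},n}$ (with $|\mathbf{k}|=m$) against an arbitrary monomial $t^j\boldsymbol{x}^{\boldsymbol{\alpha}}$ with $j+|\boldsymbol{\alpha}|\le n-1$ and apply the separation-of-variables identity above with $\boldsymbol{x}=t\boldsymbol{y}$, which factors $\langle\mathsf{Q}_{\mathbf{k},n},t^j\boldsymbol{x}^{\boldsymbol{\alpha}}\rangle_\mu$ into a product of a one-dimensional $t$-integral and a ball integral in $\boldsymbol{y}$. If $|\boldsymbol{\alpha}|<m$, the ball factor $\int_{\mathbb{B}^d}P_{\mathbf{k}}(\boldsymbol{y})\boldsymbol{y}^{\boldsymbol{\alpha}}(1-\|\boldsymbol{y}\|^2)^{\mu-1/2}d\boldsymbol{y}$ vanishes because $P_{\mathbf{k}}\in\mathcal{V}_m(\mathbb{B}^d,\mathsf{w}_\mu)$. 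If instead $|\boldsymbol{\alpha}|\ge m$, then $j+|\boldsymbol{\alpha}|-m<n-m$, and rewriting the $t$-integrand as $q_{n-m}^{\alpha_m}(t)\cdot t^{\,j+|\boldsymbol{\alpha}|-m}\cdot t^{\alpha_m}w(t)$ kills it by orthogonality of $q_{n-m}^{\alpha_m}$ against polynomials of degree less than $n-m$ in the weight $t^{\alpha_m}w(t)$. The main bookkeeping point I expect to be the real obstacle is precisely this exponent match: the specific value $\alpha_m=d+2m+2\mu-1$ is dictated by absorbing the $t^m$ coming from $\mathsf{Q}_{\mathbf{k},n}$ and the $t^{d+2\mu-1}$ coming from the separation-of-variables factor into the correct weight for $q_{n-m}^{\alpha_m}$ in both regimes simultaneously.

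Step (iii) reuses the same factorization. For $|\mathbf{k}|=m$ and $|\mathbf{k}'|=m'$, the inner product $\langle\mathsf{Q}_{\mathbf{k},n},\mathsf{Q}_{\mathbf{k}',n}\rangle_\mu$ splits as a one-dimensional integral in $t$ times a ball integral of $P_{\mathbf{k}}P_{\mathbf{k}'}$ against $(1-\|\boldsymbol{y}\|^2)^{\mu-1/2}$; when $m\ne m'$ (say $m<m'$), the ball factor vanishes because $P_{\mathbf{k}'}$ is orthogonal to polynomials of degree less than $m'$, while when $m=m'$ but $\mathbf{k}\ne\mathbf{k}'$, it vanishes by the assumed orthogonality of the basis $\mathbb{P}_m$, concluding the proof.
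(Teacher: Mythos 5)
The paper never proves this lemma: it is quoted as a known result from the references \cite{ABFX,X20}, so there is no in-paper argument to compare yours against. Your proposal is, in substance, the standard proof from those sources, and the core of it is sound: the substitution $\boldsymbol{x}=t\boldsymbol{y}$ factors $\langle \mathsf{Q}_{\mathbf{k},n},\,t^{j}\boldsymbol{x}^{\boldsymbol{\alpha}}\rangle_{\mu}$ into a ball integral times a $t$-integral with total $t$-exponent $m+j+|\boldsymbol{\alpha}|+d+2\mu-1=(j+|\boldsymbol{\alpha}|-m)+\alpha_{m}$, and your two-case split ($|\boldsymbol{\alpha}|<m$ kills the ball factor, $|\boldsymbol{\alpha}|\ge m$ kills the $t$-factor because $j+|\boldsymbol{\alpha}|-m\le n-1-m<n-m$) is exactly the right bookkeeping. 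Step (iii) is likewise correct, and you rightly observe that for $m\ne m'$ no orthogonality hypothesis on $\mathbb{P}_m$ is needed.

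There is one genuine, though easily repaired, gap: you assert that once each $\mathsf{Q}_{\mathbf{k},n}$ lies in $\mathcal{V}_{n}(\mathbb{V}^{d+1},W_{\mu})$ and the cardinality matches $\dim\mathcal{V}_{n}$, ``the basis claim is forced.'' It is not: $N$ vectors in an $N$-dimensional space form a basis only if they are linearly independent, and the lemma asserts the basis property for \emph{arbitrary} bases $\mathbb{P}_m$, where your step (iii) is unavailable. The fix is short. If $\sum_{m}\sum_{|\mathbf{k}|=m}c_{\mathbf{k}}\mathsf{Q}_{\mathbf{k},n}=0$, then setting $\boldsymbol{x}=t\boldsymbol{y}$ gives $\sum_{m=0}^{n}\bigl(\sum_{|\mathbf{k}|=m}c_{\mathbf{k}}P_{\mathbf{k}}(\boldsymbol{y})\bigr)\,t^{m}q_{n-m}^{\alpha_{m}}(t)\equiv 0$. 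Since the zeros of $q_{n-m}^{\alpha_{m}}$ lie in the interior of the support of $t^{\alpha_{m}}w(t)$, we have $q_{n-m}^{\alpha_{m}}(0)\neq 0$, so $t^{m}q_{n-m}^{\alpha_{m}}(t)$ vanishes to order exactly $m$ at $t=0$; these $n+1$ polynomials of $t$ are therefore linearly independent, each inner sum vanishes identically in $\boldsymbol{y}$, and $\mathbb{P}_{m}$ being a basis forces $c_{\mathbf{k}}=0$. With this paragraph added, your proof is complete.
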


The most notable cases occur when $w$ is a classical weight function, allowing $q_{n-m}^{{\alpha}_{m}}$ to be explicitly expressed in terms of classical orthogonal polynomials. This leads to two families of classical weight functions, which give rise to the Laguerre polynomials on the cone and the Jacobi polynomials on the cone, as studied in  \cite{X20}. These two families of polynomials are given as follows.

\subsubsection{Laguerre polynomials on the cone}

In this case, the cone \eqref{eq:conedomain} is  unbounded with $t \in [0,+\infty)$ and the weight function $w(t)=t^{{\beta}}{\mathrm{e}}^{-t}$, so that $W_{\mu}$
in \eqref{eq:Wmu-cone} becomes
\[
W_{{\beta},\mu}(t,\boldsymbol{x})=(t^{2}-\Vert \boldsymbol{x}\Vert^{2})^{\mu-\frac{1}{2}}t^{{\beta}%
}{\mathrm{e}}^{-t},\quad \mu>-\tfrac{1}{2},\quad{\beta}>-d,
\]

The Laguerre polynomial $L_{n}^{{\alpha}}$ defined by, for
${\alpha}>-1$,
\begin{equation}
L_{n}^{{\alpha}}(t)=\frac{({\alpha}+1)_{n}}{n!} \hyper{1}{1}{-n}{\alpha+1}{t} =\frac{({\alpha}+1)_{n}}{n!}\sum_{k=0}^{n}\frac{(-n)_{k}}{({\alpha
}+1)_{k}k!}t^{k}\label{102}
\end{equation}
satisfies the orthogonal relation \cite{25}
\begin{align}
\int_{0}^{\infty}L_{n}^{{\alpha}}(t)L_{m}^{{\alpha}}(t)t^{{\alpha}}%
{\mathrm{e}}^{-t}\mathrm{d}t=\frac{\Gamma({\alpha}+n+1)}{n!}\delta_{m,n}\label{ort-L}.
\end{align}
The orthogonal polynomials ${\mathsf{Q}}_{{\mathbf{k}},n}$ in
\eqref{eq:sQcone} on the cone are now presented in terms of the Laguerre
polynomials. They are denoted by ${\mathsf{L}}_{{\mathbf{k}},n}^{\beta,\mu}$ and they are given as
\begin{equation}\label{eq:laguerrecone}
{\mathsf{L}}_{{\mathbf{k}},n}^{\beta,\mu}(t,\boldsymbol{x})=L_{n-m}^{2m+2\mu+{\beta}+d-1}%
(t)t^{m}P_{{\mathbf{k}}}^{m}\left(  \frac{\boldsymbol{x}}{t}\right)  ,\quad|{\mathbf{k}%
}|=m,\, \,0\leq m\leq n.
\end{equation}

\subsubsection{Jacobi polynomials on the cone}

In this case, the cone \eqref{eq:conedomain} is bounded, without loss of generality $t \in [0,1]$, and the weight function $w(t)=t^{{\beta}}(1-t)^{{\gamma}}$ with ${\beta}>-d$ and ${\gamma}>-1$. Now $W_{\mu}$ in~\eqref{eq:Wmu-cone} becomes
\[
W_{{\beta},{\gamma},\mu}(t,\boldsymbol{x})=(t^{2}-\Vert \boldsymbol{x}\Vert^{2})^{\mu-\frac{1}{2}%
}t^{{\beta}}(1-t)^{{\gamma}},\quad \mu>-\tfrac{1}{2},\, \,{\gamma}>-1,\,
\,{\beta}>-d.
\]

The Jacobi polynomial $P_{n}^{({\alpha},{\beta})}$ is defined by,
for ${\alpha},{\beta}>-1$,
\begin{align}
P_{n}^{{\alpha},{\beta}}(t)=\frac{({\alpha}+1)_{n}}{n!} \hyper{2}{1}{-n,n+{\alpha}+{\beta}+1}{{\alpha}+1}{\frac{1-t}{2}},
\end{align}\label{200}
in terms of the hypergeometric function and it satisfies the orthogonal relation \cite{25}
\begin{align}
\int_{-1}^{1}P_{n}^{({\alpha},{\beta})}(t)P_{m}^{({\alpha},{\beta}%
)}(t)(1-t)^{{\alpha}}(1+t)^{{\beta}}\mathrm{d}t=h_{n}^{({\alpha},{\beta}%
)}\delta_{n,m} \label{ort-J},
\end{align}
where
\[
h_{n}^{({\alpha},{\beta})}=\frac{2^{\alpha+\beta+1}\Gamma \left(  n+\alpha+1\right)  \Gamma \left(
n+\beta+1\right)  }{\left(  2n+\alpha+\beta+1\right)  \Gamma \left(
n+\alpha+\beta+1\right)  n!}.
\]
The orthogonal polynomials ${\mathsf{Q}}_{{\mathbf{k}},n}$ in
\eqref{eq:sQcone} on the cone are presented in terms of the Jacobi polynomials.
They are denoted by ${\mathsf{J}}_{{\mathbf{k}},n}^{\beta,\mu,\gamma}$ and they are given as
\begin{equation}\label{eq:jacobicone}
{\mathsf{J}}_{{\mathbf{k}},n}^{\beta,\mu,\gamma}(t,\boldsymbol{x})=P_{n-m}^{(2m+2\mu+{\beta}+d-1,{\gamma}%
)}(1-2t)t^{m}P_{{\mathbf{k}}}^{m}\left(  \frac{\boldsymbol{x}}{t}\right)  ,\quad
|{\mathbf{k}}|=m,\, \,0\leq m\leq n.
\end{equation}

\section{The Fourier Transforms of Orthogonal Polynomials on the Cone}\label{sec:ft}

The main results of this investigation are to find the Fourier transformations of the classical orthogonal polynomials on the cone ${\mathbb{V}}^{d+1}$ and, to obtain new families of multivariate orthogonal functions in terms of multivariable Hahn polynomials.

The univariate Fourier transform for a function $f(x)$ is defined by \cite[p.111, Eq. (7.1)]{3}%
\begin{equation}%
\mathcal{F}
\left(  f\left(  x\right)  \right)  =\int \limits_{-\infty}^{\infty}e^{-i\xi
x}f\left(  x\right)  dx \label{16}%
\end{equation}
and in $d$-variable case, the Fourier transform for a function $f(x_{1},\dots,x_{d})$ is defined by \cite[p. 182, Eq. (11.1a)]{3}%
\begin{equation}
\mathcal{F}%
\left(  f\left(  x_{1},\dots,x_{d}\right)  \right)  =\int \limits_{-\infty
}^{\infty}\cdots \int \limits_{-\infty}^{\infty}e^{-i\left(  \xi_{1}x_{1}%
+\cdots +\xi_{d}x_{d}\right)  }f\left(  x_{1},\dots,x_{d}\right)  dx_{1}\cdots dx_{d}.
\label{17}%
\end{equation}
For our purposes, we first recall the results for Fourier transform of orthogonal polynomials on the unit ball given in \cite{233}.

Let the function be given in terms of orthogonal polynomials on the ball as \cite{233}
\begin{multline}
f_{d}\left( \boldsymbol{x};\mathbf{k},a,\mu \right)  :=f_{d}\left(x_{1},\dots,x_{d};k_{1},\dots,k_{d},a,\mu \right)  \\
 =\prod \limits_{j=1}^{d}\left(  1-\tanh^{2}x_{j}\right)  ^{a+\frac{d-j}{4}}\ P_{\mathbf{k}}^{\mu}\left(  \upsilon_{1},\dots,\upsilon_{d}\right),\label{150}%
\end{multline}
for $d\geq1$, where $a,\mu$ are real parameters and
\begin{align*}
\upsilon_{1}\left(  x_{1}\right)   & =\upsilon_{1}=\tanh x_{1},\\
\upsilon_{d}\left(  x_{1},\dots,x_{d}\right)   & =\upsilon_{d}=\tanh x_{d}%
\sqrt{\left(  1-\tanh^{2}x_{1}\right)  \left(  1-\tanh^{2}x_{2}\right)
\dots \left(  1-\tanh^{2}x_{d-1}\right)  },
\end{align*}
for $d\geq2$. From the latter expression, the function $f_{d}$ can be written in terms of $f_{d-1}$ in the following forms \cite{233}
\begin{multline}
 f_{d}\left(  x_{1},\dots,x_{d};k_{1},\dots,k_{d},a,\mu \right)  \\
 =\left(  1-\tanh^{2}x_{1}\right)  ^{a+\frac{k_{2}+\cdots+k_{d}}{2}+\frac{d-1}{4}}C_{k_{1}}^{\left(  k_{2}+\cdots+k_{d}+\mu+\frac{d-1}{2}\right)  }\left(\tanh x_{1}\right)  \\
 \times f_{d-1}\left(  x_{2},\dots,x_{d};k_{2},\dots,k_{d},a,\mu \right),\label{g1}%
\end{multline}
or%
\begin{multline}
f_{d}\left(  x_{1},\dots,x_{d};k_{1},\dots,k_{d},a,\mu \right)
 =\left(  1-\tanh^{2}x_{d}\right)  ^{a}C_{k_{d}}^{\left(  \mu \right)
}\left(  \tanh x_{d}\right)  \\
 \times f_{d-1}\left(  x_{1},\dots,x_{d-1};k_{1},\dots,k_{d-1},a+\frac{k_{d}}%
{2}+\frac{1}{4},\mu+k_{d}+\frac{1}{2}\right), \label{g2}%
\end{multline}
valid for $d\geq1$.

The Fourier transform of $ f_{d}\left(   \boldsymbol{x};\mathbf{k},a,\mu \right)$ defined in \eqref{150} has been calculated by consecutively iteration in \cite{233}.
\begin{lemma}\cite{233}
\label{prop:OPcone2}
The Fourier transform of the function $f_{d}\left( \boldsymbol{x};\mathbf{k},a,\mu \right)  $ is explicitly given by
\begin{multline}%
\mathcal{F} \left(  f_{d}\left( \boldsymbol{x};\mathbf{k},a,\mu \right)  \right)  :=
\mathcal{F}
\left(  f_{d}\left(  x_{1},\dots,x_{d};k_{1},\dots,k_{d},a,\mu \right)  \right) \\
=2^{2da+\frac{d\left(  d-5\right)  }{4}+\text{$
{\textstyle \sum \limits_{j=1}^{d-1}}
$}{jk}_{j+1}}\prod \limits_{j=1}^{d}\left \{  \frac{\left(  2\left(
\left \vert \mathbf{k}^{j+1}\right \vert +\mu+\frac{d-j}{2}\right)  \right)
_{k_{j}}}{k_{j}!}\Theta_{j}^{d}\left(  a,\mu,\mathbf{k};\xi_{j}\right)
\right \}  ,  \label{18}%
\end{multline}
where%
\begin{multline*}
\Theta_{j}^{d}\left(  a,\mu,\mathbf{k};\xi_{j}\right)  =B\left(a+\frac{\left \vert \mathbf{k}^{j+1}\right \vert +i\xi_{j}}{2}+\frac{d-j}{4},a+\frac{\left \vert \mathbf{k}^{j+1}\right \vert -i\xi_{j}}{2}+\frac{d-j}{4}\right)    \\
\times
\hyper{3}{2}{-k_{j},\ k_{j}+2\left(  \left \vert \mathbf{k}^{j+1}\right \vert +\mu+\frac{d-j}{2}\right)  ,\ a+\frac{\left \vert \mathbf{k}^{j+1}\right \vert +i\xi_{j}}{2}+\frac{d-j}{4}}{\left \vert \mathbf{k}^{j+1}\right \vert +\mu+\frac{d-j+1}{2},\  \left \vert \mathbf{k}^{j+1}\right \vert +2a+\frac{d-j}{2}}{1},
\end{multline*}
or in terms of the continuous Hahn polynomials
\begin{multline*}
\Theta_{j}^{d}\left(  a,\mu,\mathbf{k};\xi_{j}\right)  =\frac{k_{j}!}{i^{k_{j}}\left(  \left \vert \mathbf{k}^{j+1}\right \vert +\mu+\frac{d-j+1}{2}\right)  _{k_{j}}\left(  \left \vert \mathbf{k}^{j+1}\right \vert +2a+\frac{d-j}{2}\right)  _{k_{j}}}   \\
\times B\left(  a+\frac{\left \vert \mathbf{k}^{j+1}\right \vert +i\xi_{j}}{2}+\frac{d-j}{4},a+\frac{\left \vert \mathbf{k}^{j+1}\right \vert -i\xi_{j}}{2}+\frac{d-j}{4}\right)    \\
\times p_{k_{j}}\left(  \frac{\xi_{j}}{2};a+\frac{\left \vert \mathbf{k}^{j+1}\right \vert }{2}+\frac{d-j}{4},\mu-a+\frac{\left \vert \mathbf{k}^{j+1}\right \vert +1}{2}+\frac{d-j}{4}\right.    \\
\left.  ,\mu-a+\frac{\left \vert \mathbf{k}^{j+1}\right \vert +1}{2}+\frac{d-j}{4},a+\frac{\left \vert \mathbf{k}^{j+1}\right \vert }{2}+\frac{d-j}{4}\right),
\end{multline*}
where the beta function is given by (cf. \cite{26})
\begin{equation}
B\left(  a,b\right)  ={\displaystyle \int \limits_{0}^{1}}x^{a-1}\left(
 1-x\right)  ^{b-1}dx=\frac{\Gamma \left(  a\right)\Gamma \left(  b\right)  }{\Gamma \left(  a+b\right)  }, \qquad \Re\left(  a\right)  ,\Re\left(  b\right)  >0,
\end{equation}
and the continuous Hahn polynomials are defined as \cite{27}
\begin{equation}
p_{k}\left( x;a,b,c,d\right)
=i^{k}\frac{\left( a+c\right) _{k}\left(a+d\right) _{k}}{k!} \hyper{3}{2}{-k,\ k+a+b+c+d-1,\ a+ix}{a+c,\ a+d}{1} \label{hahn}
\end{equation}
in terms of the $_{3}F_{2}$ hypergeometric function.
\end{lemma}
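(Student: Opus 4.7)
The plan is to prove the statement by induction on $d$, using the recursion \eqref{g2} which expresses $f_d$ as a product of a one-variable factor in $x_d$ and $f_{d-1}$ in the remaining variables with shifted parameters $(a',\mu') = (a + \tfrac{k_d}{2} + \tfrac{1}{4},\, \mu + k_d + \tfrac12)$. Since the Fourier kernel factorizes as $e^{-i(\xi_1 x_1 + \cdots + \xi_d x_d)} = e^{-i\xi_d x_d} \cdot e^{-i(\xi_1 x_1 + \cdots + \xi_{d-1} x_{d-1})}$, Fubini reduces the $d$-dimensional transform to the product of a one-variable transform in $x_d$ and a $(d-1)$-variable transform of $f_{d-1}$.

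For the base case $d = 1$, I would compute
\[
\mathcal{F}(f_1)(\xi) = \int_{-\infty}^\infty e^{-i\xi x}(1-\tanh^2 x)^a\, C_k^{(\mu)}(\tanh x)\,dx
\]
by inserting the hypergeometric representation \eqref{hyper} of $C_k^{(\mu)}$ and substituting $s = (1-\tanh x)/2 = (1+e^{2x})^{-1}$. Under this change of variable $1-\tanh^2 x = 4s(1-s)$, $dx = -ds/[2s(1-s)]$, and $e^{-i\xi x} = ((1-s)/s)^{-i\xi/2}$, so each term of the series becomes $2^{2a-1}$ times a beta integral $B(a + n + i\xi/2,\, a - i\xi/2)$. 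Factoring out the common factor $B(a + i\xi/2, a - i\xi/2)$ leaves a terminating $_3F_2(1)$ which is exactly $\Theta_1^1(a,\mu,k;\xi)$; the recasting in terms of continuous Hahn polynomials is then immediate from \eqref{hahn}.

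For the inductive step, the one-variable transform in $x_d$ is the base case with parameters $(a,\mu)$, which produces the $j=d$ factor of the product in \eqref{18} (observing that $|\mathbf{k}^{d+1}|=0$ and $d-j=0$). The remaining $(d-1)$-variable transform, applied to $f_{d-1}$ with parameters $(a',\mu')$, is given by the induction hypothesis. Writing $|\mathbf{k}^{j+1}|_{d-1} = k_{j+1} + \cdots + k_{d-1}$, so that $|\mathbf{k}^{j+1}| = |\mathbf{k}^{j+1}|_{d-1} + k_d$, a short calculation verifies the matching identities
\[
a' + \frac{|\mathbf{k}^{j+1}|_{d-1}}{2} + \frac{(d-1)-j}{4} = a + \frac{|\mathbf{k}^{j+1}|}{2} + \frac{d-j}{4}, \quad \mu' + |\mathbf{k}^{j+1}|_{d-1} + \frac{(d-1)-j}{2} = \mu + |\mathbf{k}^{j+1}| + \frac{d-j}{2},
\]
for $1 \le j \le d-1$. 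Thus the inductive $\Theta_j^{d-1}(a',\mu',\cdot;\xi_j)$ matches $\Theta_j^d(a,\mu,\cdot;\xi_j)$ term by term, and the Pochhammer prefactors $(2(|\mathbf{k}^{j+1}| + \mu + (d-j)/2))_{k_j}/k_j!$ align for the same reason. Combining $2^{2a-1}$ from the $x_d$-integral with the inductive prefactor $2^{2(d-1)a' + (d-1)(d-6)/4 + \sum_{j=1}^{d-2} j\, k_{j+1}}$ and substituting $a' = a + k_d/2 + 1/4$ reconstructs $2^{2da + d(d-5)/4 + \sum_{j=1}^{d-1} j\, k_{j+1}}$ after elementary algebra.

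The main obstacle is the base case: carefully turning the Fourier integral on $\mathbb{R}$ into a beta-type integral on $[0,1]$ and resumming the resulting series as a $_3F_2(1)$, then matching this with the continuous Hahn polynomial expression using \eqref{hahn}. Once this step is secured, the inductive step reduces to checking the parameter identities above, which is purely algebraic and automatic.
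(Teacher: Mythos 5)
Your proposal is correct, and it follows essentially the same route as the source: the paper does not reprove this lemma but imports it from \cite{233}, where it is obtained precisely by the ``consecutive iteration'' you describe, i.e.\ induction on $d$ via the recursion \eqref{g2} together with the one-dimensional Gegenbauer--Fourier integral reduced to a beta integral by the substitution $s=(1-\tanh x)/2$. Your parameter-matching identities for $(a',\mu')=(a+\tfrac{k_d}{2}+\tfrac14,\mu+k_d+\tfrac12)$ and the bookkeeping of the power of $2$ all check out.
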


\subsection{The Fourier transform of Laguerre polynomials on the cone}
Let us consider a function in terms of Laguerre polynomials on the cone \eqref{eq:laguerrecone} as
\begin{align}
g_{n,\mathbf{k}}\left( t,\boldsymbol{x};a,b,\beta,\mu \right)   & :=g_{n,k_{1},\dots,k_{d}}\left(
t,x_{1},\dots,x_{d};a,b,\beta,\mu  \right) \nonumber \\
& =\prod \limits_{j=1}^{d}\left(  1-\tanh^{2}x_{j}\right)  ^{a+\frac{d-j}{4}%
}{\mathsf{L}}_{{\mathbf{k}},n}^{\beta,\mu}\left(  \tau_{1},\dots,\tau_{d},\tau_{d+1}\right)\exp(-\frac{e^{t}}{2}+bt)
,\label{15}%
\end{align}
for $d\geq1$, where $a,b,\beta,\mu$ real parameters and
\begin{align*}
\tau_{1}\left(  t\right)   & =\tau_{1}=e^t,\\
\tau_{2}\left( t, x_{1}\right)   & =\tau_{2}=e^t\tanh x_{1},\\
\tau_{d+1}\left( t, x_{1},\dots,x_{d}\right)   & =\tau_{d+1}=e^t\tanh x_{d} \\
& \times \sqrt{\left(  1-\tanh^{2}x_{1}\right)  \left(  1-\tanh^{2}x_{2}\right) \cdots \left(  1-\tanh^{2}x_{d-1}\right)  },
\end{align*}
for $d\geq1$.
In explicit form, we can write
\begin{align*}
g_{n,\mathbf{k}}\left( t,\boldsymbol{x};a,b,\beta,\mu \right) & =\prod \limits_{j=1}^{d}\left(  1-\tanh^{2}x_{j}\right)  ^{a+\frac{d-j}{4}%
}\ L_{n-\left \vert \mathbf{k}\right \vert }^{2\left \vert \mathbf{k}\right \vert+2\mu+\beta+d-1}\left(e^t\right) \\
&\times \exp(-\frac{e^{t}}{2}+bt+\left \vert \mathbf{k}\right \vert t)\prod \limits_{j=1}^{d-1}\left(  1-\tanh^{2}x_{j}\right)  ^\frac{k_{j+1}+\cdots +k_{d}}{2}
\prod \limits_{j=1}^{d}C_{k_{j}}^{\left(  \lambda_{j} \right)}\left(\tanh x_{j}\right)
\end{align*}
where $\lambda_{j}=\mu+\left \vert \mathbf{k}^{j+1}\right \vert +\frac{d-j}{2}$, or
\begin{equation}
g_{n,\mathbf{k}}\left( t, \boldsymbol{x};a,b,\beta,\mu \right) \\
=\exp(-\frac{e^{t}}{2}+bt+\left \vert \mathbf{k}\right \vert t) L_{n-\left \vert \mathbf{k}\right \vert }^{2\left \vert \mathbf{k}\right \vert+2\mu+\beta+d-1}\left(e^t\right)f_{d}\left(
\boldsymbol{x};\mathbf{k},a,\mu \right)\label{100}
\end{equation}
where $f_{d}\left(\boldsymbol{x};\mathbf{k},a,\mu \right)$ is defined in \eqref{150}.

We now calculate the Fourier transform of the function $g_{n,\mathbf{k}}\left( t,\boldsymbol{x};a,b,\beta,\mu \right)$ given by (\ref{100}) by using Lemma \ref{prop:OPcone2}.

\begin{theorem}
The Fourier transform of the function $g_{n,\mathbf{k}}\left( t, \boldsymbol{x};a,b,\beta,\mu \right)$ is given explicitly as follows%
\begin{multline}%
\mathcal{F}
\left(  g_{n,\mathbf{k}}\left(  t,\boldsymbol{x};a,b,\beta,\mu \right)  \right)  :=%
\mathcal{F}\left(  g_{n,\mathbf{k}}\left( t, x_{1},\dots,x_{d};a,b,\beta,\mu \right)  \right) \\
=2^{b+\left \vert \mathbf{k}\right \vert -i\xi_{d+1}+2ad+\frac{d\left(  d-5\right)  }{4}+\text{$
{\textstyle \sum \limits_{j=1}^{d-1}}
$}{ jk}_{j+1}}\frac{ \left(  2 \left\vert \mathbf{k} \right\vert + 2\mu + \beta + d \right)_{n - \left\vert \mathbf{k} \right\vert} \Gamma \left( b + \left\vert \mathbf{k} \right\vert - i \xi_{d+1} \right)}{\left( n - \left\vert \mathbf{k} \right\vert \right)!}   \\
\times \prod \limits_{j=1}^{d}\left \{  \frac{\left(  2\left(
\left \vert \mathbf{k}^{j+1}\right \vert +\mu+\frac{d-j}{2}\right)  \right)
_{k_{j}}}{k_{j}!} \Theta_{j}^{d}\left(  a,\mu,\mathbf{k};\xi_{j}\right)
\right \} \Lambda \left(  n,\mathbf{k},b,\mu,\beta ,\xi_{d+1}\right)  \label{18b}%
\end{multline}
where
\begin{align*}
\Lambda \left(  n,\mathbf{k},b,\mu,\beta ,\xi_{d+1}\right) = \hyper{2}{1}{-n+\left \vert \mathbf{k}\right \vert ,~b+\left \vert \mathbf{k}\right \vert -i\xi_{d+1}}{2\left \vert \mathbf{k}\right \vert +2\mu+\beta+d}{2},
\end{align*}
and
\begin{multline*}
\Theta_{j}^{d}\left(  a,\mu,\mathbf{k};\xi_{j}\right)  =B\left(
a+\frac{\left \vert \mathbf{k}^{j+1}\right \vert +i\xi_{j}}{2}+\frac{d-j}%
{4},a+\frac{\left \vert \mathbf{k}^{j+1}\right \vert -i\xi_{j}}{2}+\frac{d-j}%
{4}\right)   \\
\times \hyper{3}{2}{-k_{j},\ k_{j}+2\left(  \left \vert \mathbf{k}^{j+1}\right \vert +\mu+\frac{d-j}{2}\right)  ,\ a+\frac{\left \vert \mathbf{k}^{j+1}\right \vert +i\xi_{j}}{2}+\frac{d-j}{4}}{\left \vert \mathbf{k}^{j+1}\right \vert +\mu+\frac{d-j+1}{2},\  \left \vert \mathbf{k} ^{j+1}\right \vert +2a+\frac{d-j}{2}}{1} ,
\end{multline*}
or in terms of the continuous Hahn polynomials (\ref{hahn})%
\begin{align*}
\Theta_{j}^{d}\left(  a,\mu,\mathbf{k};\xi_{j}\right)  =\frac{k_{j}!}%
{i^{k_{j}}\left(  \left \vert \mathbf{k}^{j+1}\right \vert +\mu+\frac{d-j+1}%
{2}\right)  _{k_{j}}\left(  \left \vert \mathbf{k}^{j+1}\right \vert
+2a+\frac{d-j}{2}\right)  _{k_{j}}}  & \\
\times B\left(  a+\frac{\left \vert \mathbf{k}^{j+1}\right \vert +i\xi_{j}}%
{2}+\frac{d-j}{4},a+\frac{\left \vert \mathbf{k}^{j+1}\right \vert -i\xi_{j}}%
{2}+\frac{d-j}{4}\right)   & \\
\times p_{k_{j}}\left(  \frac{\xi_{j}}{2};a+\frac{\left \vert \mathbf{k}%
^{j+1}\right \vert }{2}+\frac{d-j}{4},\mu-a+\frac{\left \vert \mathbf{k}%
^{j+1}\right \vert +1}{2}+\frac{d-j}{4}\right.   & \\
\left.  ,\mu-a+\frac{\left \vert \mathbf{k}^{j+1}\right \vert +1}{2}+\frac
{d-j}{4},a+\frac{\left \vert \mathbf{k}^{j+1}\right \vert }{2}+\frac{d-j}%
{4}\right)  .  &
\end{align*}

\end{theorem}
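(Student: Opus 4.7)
The key observation is the factorization recorded in (\ref{100}): the function $g_{n,\mathbf{k}}(t,\boldsymbol{x};a,b,\beta,\mu)$ decomposes as a function of $t$ alone times $f_{d}(\boldsymbol{x};\mathbf{k},a,\mu)$. Since the $(d+1)$-variable Fourier transform of a separable function factors as a product, I would write
$$\mathcal{F}(g_{n,\mathbf{k}}) = \mathcal{F}_{t}\!\left(e^{-e^{t}/2 + (b+|\mathbf{k}|)t}\, L_{n-|\mathbf{k}|}^{2|\mathbf{k}|+2\mu+\beta+d-1}(e^{t})\right)\!(\xi_{d+1}) \cdot \mathcal{F}_{\boldsymbol{x}}\!\left(f_{d}(\boldsymbol{x};\mathbf{k},a,\mu)\right)\!(\xi_{1},\dots,\xi_{d}).$$
The second factor is supplied directly by Lemma~\ref{prop:OPcone2}, producing the prefactor $2^{2da+d(d-5)/4+\sum jk_{j+1}}$, the Pochhammer product, and the $\Theta_{j}^{d}$ terms (either in the ${}_{3}F_{2}$ form or in the continuous Hahn form). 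Thus the theorem is reduced to the evaluation of a single univariate Fourier integral.

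To handle the $t$-integral, I would substitute $u=e^{t}$, so that $dt=du/u$ and $e^{-i\xi_{d+1}t}=u^{-i\xi_{d+1}}$; the integral becomes
$$\int_{0}^{\infty} u^{\,b+|\mathbf{k}|-i\xi_{d+1}-1}\, e^{-u/2}\, L_{n-|\mathbf{k}|}^{2|\mathbf{k}|+2\mu+\beta+d-1}(u)\, du.$$
Using the hypergeometric expansion (\ref{102}) of the Laguerre polynomial and interchanging sum and integral, each term reduces to $\int_{0}^{\infty} u^{s+k-1} e^{-u/2}\,du = 2^{s+k}\Gamma(s+k)$ with $s=b+|\mathbf{k}|-i\xi_{d+1}$. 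Applying $\Gamma(s+k)=(s)_{k}\Gamma(s)$ and extracting the common factor $2^{s}\Gamma(s)$, the remaining sum is
$$\sum_{k=0}^{n-|\mathbf{k}|}\frac{(-(n-|\mathbf{k}|))_{k}\,(b+|\mathbf{k}|-i\xi_{d+1})_{k}}{(2|\mathbf{k}|+2\mu+\beta+d)_{k}\, k!}\, 2^{k},$$
which is precisely $\Lambda(n,\mathbf{k},b,\mu,\beta,\xi_{d+1})$. The normalizing constant $(2|\mathbf{k}|+2\mu+\beta+d)_{n-|\mathbf{k}|}/(n-|\mathbf{k}|)!$ inherited from (\ref{102}) supplies the remaining front factor.

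Multiplying the two contributions yields (\ref{18b}): the exponent $b+|\mathbf{k}|-i\xi_{d+1}$ combines with $2da+d(d-5)/4+\sum jk_{j+1}$ into the single power of $2$ stated in the theorem, and $\Gamma(b+|\mathbf{k}|-i\xi_{d+1})$, the Pochhammer ratio, and $\Lambda$ assemble into the new $t$-dependent piece while the $\Theta_{j}^{d}$ factors are carried over unchanged from Lemma~\ref{prop:OPcone2}. The only real obstacle is careful bookkeeping of the constants, in particular verifying that the $2^{s}$ from the Laguerre integral cleanly merges with the ball prefactor, and then recasting the resulting ${}_{3}F_{2}$ in $\Theta_{j}^{d}$ into the continuous Hahn form via (\ref{hahn}). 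Convergence of the Fourier integral and the interchange of sum and integral are justified by the rapid decay $e^{-e^{t}/2}$ at $+\infty$ and the polynomial-times-exponential growth $e^{(b+|\mathbf{k}|)t}$ at $-\infty$ (for $\Re(b+|\mathbf{k}|)>0$).
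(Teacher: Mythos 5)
Your proposal is correct and follows essentially the same route as the paper's own proof: factor $g_{n,\mathbf{k}}$ via (\ref{100}), invoke Lemma~\ref{prop:OPcone2} for the $\boldsymbol{x}$-integral, substitute $u=e^{t}$, expand the Laguerre polynomial by (\ref{102}), and evaluate the resulting Gamma integrals to assemble the $_{2}F_{1}$ at argument $2$. The bookkeeping of the power of $2$ and the Pochhammer prefactor matches the paper's computation.
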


\begin{proof}
It follows from (\ref{100})
\begin{multline}
\mathcal{F}\left(  g_{n,\mathbf{k}}\left( t, \boldsymbol{x};a,b,\beta,\mu \right) \right)   ={\displaystyle \int \limits_{-\infty}^{\infty}}
{\displaystyle \int \limits_{-\infty}^{\infty}}\cdots {\displaystyle \int \limits_{-\infty}^{\infty}}
g_{n,\mathbf{k}}\left( t,\boldsymbol{x};a,b,\beta,\mu \right) e^{-i\left(  \xi_{1}x_{1}+\cdots+\xi_{d}x_{d}+\xi_{d+1}t\right)  }d\boldsymbol{x}dt \nonumber \\
    ={\displaystyle \int \limits_{0}^{\infty}}
{\displaystyle \int \limits_{-\infty}^{\infty}}\cdots {\displaystyle \int \limits_{-\infty}^{\infty}}
e^{-i\left(  \xi_{1}x_{1}+\cdots+\xi_{d}x_{d}\right)}f_{d}\left(
x_{1},\dots,x_{d};k_{1},\dots,k_{d},a,\mu \right) \nonumber  \\
   \times \exp(-u/2) L_{n-\left \vert \mathbf{k}\right \vert }^{2\left \vert \mathbf{k}\right \vert+2\mu+\beta+d-1}\left(u\right) u^{b+\left \vert \mathbf{k}\right \vert -i\xi_{d+1}-1}d\boldsymbol{x}du \nonumber\\
   =\mathcal{F}\left(f_{d}\left(
x_{1},\dots,x_{d};k_{1},\dots,k_{d},a,\mu \right)\right)\nonumber\\
   \times {\displaystyle \int \limits_{0}^{\infty}}\exp(-u/2) L_{n-\left \vert \mathbf{k}\right \vert }^{2\left \vert \mathbf{k}\right \vert+2\mu+\beta+d-1}\left(u\right) u^{b+\left \vert \mathbf{k}\right \vert -i\xi_{d+1}-1}du\nonumber.
\end{multline}
From the definition of Laguerre polynomials (\ref{102}) and Gamma function, we can write
\begin{multline}
\mathcal{F}\left(  g_{n,\mathbf{k}}\left( t,\boldsymbol{x};a,b,\beta,\mu \right) \right)  = \mathcal{F}\left(f_{d}\left(
x_{1},\dots,x_{d};k_{1},\dots,k_{d},a,\mu \right)\right) \frac{\left(  2\left \vert \mathbf{k}\right \vert +2\mu+\beta+d\right)
_{n-\left \vert \mathbf{k}\right \vert }}{\left(  n-\left \vert \mathbf{k}
\right \vert \right)  !} \nonumber
\\
   \times {\displaystyle \sum \limits_{j=0}^{n-\left \vert \mathbf{k}\right \vert }}
\frac{\left(  -n+\left \vert \mathbf{k}\right \vert \right)  _{j}}{\left(
2\left \vert \mathbf{k}\right \vert +2\mu+\beta+d\right)  _{j}~j!}{\displaystyle \int \limits_{0}^{\infty}}
u^{j+b+\left \vert \mathbf{k}\right \vert -i\xi_{d+1}-1}e^{-u/2}du \nonumber \\
   =\mathcal{F}\left(f_{d}\left(
x_{1},\dots,x_{d};k_{1},\dots,k_{d},a,\mu \right)\right) \frac{\left(  2\left \vert \mathbf{k}\right \vert +2\mu+\beta+d\right)
_{n-\left \vert \mathbf{k}\right \vert }}{\left(  n-\left \vert \mathbf{k}%
\right \vert \right)  !} \nonumber \\
   \times 2^{b+\left \vert \mathbf{k}\right \vert -i\xi_{d+1}%
}\Gamma \left(  b+\left \vert \mathbf{k}\right \vert -i\xi_{d+1}\right)
~_{2}F_{1}\left(\genfrac{.}{\vert}{0pt}{}{-n+\left \vert \mathbf{k}\right \vert ,~b+\left \vert
\mathbf{k}\right \vert -i\xi_{d+1}}{2\left \vert \mathbf{k}\right \vert
+2\mu+\beta+d}~2\right). \nonumber
\end{multline}
The proof follows from \eqref{18}.

Particularly, in the case $d=1$, the Fourier transform of the function
\begin{align}
g_{n,k_{1}}\left( t, x_{1};a,b,\beta,\mu \right)    &  =\left(  1-\tanh^{2}x_{1}\right)^{a} {\mathsf{L}}_{k_{1},n}^{\beta,\mu}\left(\tau_{1}, \tau_{2}\right) \exp(-\frac{e^{t}}{2}+bt) \nonumber \\
   &  =\left(  1-\tanh^{2}x_{1}\right)^{a} L_{n-k_{1}}^{2k_{1}+2\mu+\beta}\left( e^{t} \right) \exp(-\frac{e^{t}}{2}+bt+k_{1}t)
\ C_{k_{1}}^{\left(  \mu \right)  }\left(  \tanh x_{1}\right),\nonumber
\label{g1dim}%
\end{align}
where $\tau_{1}=e^{t}$ and $\tau_{2}=e^{t}\tanh x_{1},$ is given by
\begin{multline}
\mathcal{F}
\left( g_{n,k_{1}}\left( t, x_{1};a,b,\beta,\mu \right) \right)   \\
=\int \limits_{-\infty}^{\infty}\int \limits_{-\infty}^{\infty}e^{-i\xi_{1}x_{1}-i\xi_{2}t}\left(  1-\tanh^{2}x_{1}\right)^{a} \exp(-\frac{e^{t}}{2}+bt+k_{1}t)  L_{n-k_{1}}^{2k_{1}+2\mu+\beta}\left( e^{t} \right)
\ C_{k_{1}}^{\left(  \mu \right)  }\left(  \tanh x_{1}\right) dx_{1}dt \nonumber \\
  =\frac{2^{2a+b+k_{1}-i\xi_{2}-1}\left(  2k_{1}+2\mu+\beta+1\right)  _{n-k_{1}}~\left(
2\mu \right)  _{k_{1}}}{k_{1}!\left(  n-k_{1}\right)  !} \\ \times \Gamma \left(  b+k_{1}-i\xi_{2}\right)
   \Lambda \left(  n,k_{1},b,\mu,\beta ,\xi_{2}\right) \Theta_{1}^{1}\left(  a,\mu,k_{1};\xi_{1}\right)
\end{multline}
where%
\[
\Theta_{1}^{1}\left(  a,\mu,k_{1};\xi_{1}\right)  =B\left(  a+\frac{i\xi_{1}}{2},a-\frac{i\xi_{1}}{2}\right) ~_{3}F_{2}\left(\genfrac{.}{\vert}{0pt}{}{-k_{1},~k_{1}+2\mu,~a+\frac{i\xi_{1}}{2}}{\mu+\frac{1}%
{2},~2a}~1\right),
\]
and
\[
\Lambda \left(  n,k_{1},b,\mu,\beta ,\xi_{2}\right) =_{2}F_{1}\left(\genfrac{.}{\vert}{0pt}{}{-n+k_{1},~b+k_{1}-i\xi_{2}}{2k_{1}+2\mu+\beta+1}~2\right).
\]

It can be rewritten in terms of the continuous Hahn polynomials $p_{n}\left(  x;a,b,c,d\right)$ defined in \eqref{hahn} as
\begin{align*}%
\mathcal{F}\left( g_{n,k_{1}}\left( t, x_{1};a,b,\beta,\mu \right) \right)  &  =\frac
{2^{2a+b+k_{1}-1-i\xi_{2}}\left(  2k_{1}+2\mu+\beta+1\right)  _{n-k_{1}}\left(  2\mu \right)  _{k_{1}}\Gamma \left(  b+k_{1}-i\xi_{2}\right)}{i^{k_{1}}\left(  n-k_{1}\right)  !\left(  2a\right)  _{k_{1}}\left(  \mu+1/2\right)  _{k_{1}}} \\
&  \times B\left(  a+\frac{i\xi_{1}}{2},\ a-\frac
{i\xi_{1}}{2}\right) \Lambda \left(  n,k_{1},b,\mu,\beta ,\xi_{2}\right)\\
&  \times p_{k_{1}}\left(  \frac{\xi_{1}}{2};a,\mu-a+1/2,\mu-a+1/2,a\right) .
\end{align*}

\end{proof}

\subsection{The class of special functions using Fourier transform of the
Laguerre polynomials on the cone}

The Parseval's identity corresponding to (\ref{16}) is given by the statement
\cite[p.118, Eq. (7.17)]{3}
\begin{equation}
\int \limits_{-\infty}^{\infty}g\left(  x\right)  \overline {h\left(  x\right)
}dx=\frac{1}{2\pi}\int \limits_{-\infty}^{\infty}\mathcal{F}
\left(  g\left(  x\right)  \right)  \overline{ \mathcal{F}\left(  h\left(  x\right)  \right)  }d\xi,\label{monoparseval}%
\end{equation}
and in $d-$variable case, the Parseval's identity corresponding to (\ref{17})
is as follows \cite[p. 183, (iv)]{3}
\begin{multline}
 \int \limits_{-\infty}^{\infty}\cdots \int \limits_{-\infty}^{\infty}g\left(
x_{1},\dots,x_{d}\right)  \overline{h\left(  x_{1},\dots,x_{d}\right)  }
dx_{1}\cdots dx_{d} \\
 =\frac{1}{\left(  2\pi \right)  ^{d}}\int \limits_{-\infty}^{\infty}
\cdots \int \limits_{-\infty}^{\infty}\mathcal{F}\left(  g\left(  x_{1},\dots,x_{d}\right)  \right)  \overline{
\mathcal{F}\left(  h\left(  x_{1},\dots,x_{d}\right)  \right)  }d\xi_{1}\cdots d\xi_{d}.\label{multi}
\end{multline}

\begin{theorem}
Let $\mathbf{k}$ and $\mathbf{k}^{j}$be as in \eqref{notation}. Let
$\mathbf{a=}\left(  a_{1},a_{2}\right)  $, $\left \vert \mathbf{a}\right \vert
=a_{1}+a_{2},~\mathbf{b=}\left(  b_{1},b_{2}\right)  $ and $\left \vert
\mathbf{b}\right \vert =b_{1}+b_{2}$. The following equality is satisfied%
\begin{multline*}
  \int \limits_{-\infty}^{\infty}\cdots \int \limits_{-\infty}^{\infty}%
\  \Gamma \left(  b_{1}-it\right)  \Gamma \left(  b_{2}+it\right)
A_{n\boldsymbol{,k}}^{\left(  d+1\right)  }\left( it, i\boldsymbol{x,};a_{1},a_{2},b_{1},b_{2}\right) \\
\times \ A_{m\boldsymbol{,l}}^{\left(  d+1\right)
}\left( -it, -i\boldsymbol{x};a_{2},a_{1},b_{2},b_{1}\right)  d\boldsymbol{x}dt\\
  =\left(  2\pi \right)  ^{d+1}2^{-2d\left \vert \mathbf{a}\right \vert
-2\left \vert \mathbf{k}\right \vert -\left \vert \mathbf{b}\right \vert
+d+1}h_{\mathbf{k}}^{\left(  a_{1}+a_{2}-\frac{1}{2}\right)  }\frac
{\Gamma \left(  \left \vert \mathbf{b}\right \vert +n+\left \vert \mathbf{k}%
\right \vert \right)  \left(  n-\left \vert \mathbf{k}\right \vert \right)
!}{(\left(  2\left \vert \mathbf{k}\right \vert +\left \vert \mathbf{b}\right \vert
\right)  _{n-\left \vert \mathbf{k}\right \vert })^{2}}\delta_{n,m}\\
  \times \prod \limits_{j=1}^{d}\frac{\left(  k_{j}!\right)  ^{2}\Gamma \left(
\left \vert \mathbf{k}^{j+1}\right \vert +2a_{1}+\frac{d-j}{2}\right)
\Gamma \left(  \left \vert \mathbf{k}^{j+1}\right \vert +2a_{2}+\frac{d-j}%
{2}\right)  }{2^{2\left \vert \mathbf{k}^{j+1}\right \vert }\left(  \left(
2\left \vert \mathbf{k}^{j+1}\right \vert +2\left \vert \mathbf{a}\right \vert
+d-j-1\right)  _{k_{j}}\right)  ^{2}}\delta_{k_{j},l_{j}}%
\end{multline*}
for $a_{1},a_{2},b_{1},b_{2}>0$ where $h_{\mathbf{k}}^{\left(  a_{1}%
+a_{2}-\frac{1}{2}\right)  }$ is given by (\ref{Norm}) and%
\begin{align*}
&  \  \ A_{n\boldsymbol{,k}}^{\left(  d+1\right)  }\left( t, \boldsymbol{x}%
;a_{1},a_{2},b_{1},b_{2}\right)  =~_{2}F_{1}\left(
\genfrac{}{}{0pt}{0}{-n+\left \vert \mathbf{k}\right \vert ,\ b_{1}+\left \vert
\mathbf{k}\right \vert -t}{2\left \vert \mathbf{k}\right \vert +\left \vert
\mathbf{b}\right \vert}%
\mid2\right)  \left(  b_{1}-t\right)  _{\left \vert \mathbf{k}\right \vert }\\
&  \times \prod \limits_{j=1}^{d}\left \{  \Gamma \left(  a_{1}+\frac{\left \vert
\mathbf{k}^{j+1}\right \vert -x_{j}}{2}+\frac{d-j}{4}\right)  \Gamma \left(
a_{1}+\frac{\left \vert \mathbf{k}^{j+1}\right \vert +x_{j}}{2}+\frac{d-j}%
{4}\right)  \right. \\
&  \times \left.  _{3}F_{2}\left(
\genfrac{}{}{0pt}{0}{-k_{j},\ k_{j}+2\left(  \left \vert \mathbf{k}%
^{j+1}\right \vert +\left \vert \mathbf{a}\right \vert +\frac{d-j-1}{2}\right)
,\ a_{1}+\frac{\left \vert \mathbf{k}^{j+1}\right \vert +x_{j}}{2}+\frac{d-j}%
{4}}{\left \vert \mathbf{k}^{j+1}\right \vert +\left \vert \mathbf{a}\right \vert
+\frac{d-j}{2},\  \left \vert \mathbf{k}^{j+1}\right \vert +2a_{1}+\frac{d-j}{2}}%
\mid1\right)  \right \}
\end{align*}
or, in terms of Hahn polynomials (\ref{hahn})%
\[%
\begin{array}
[b]{l}%
\ A_{n\boldsymbol{,k}}^{\left(  d+1\right)  }\left( t, \boldsymbol{x};a_{1},a_{2},b_{1},b_{2}\right)  =~_{2}F_{1}\left(
\genfrac{}{}{0pt}{0}{-n+\left \vert \mathbf{k}\right \vert ,\ b_{1}+\left \vert
\mathbf{k}\right \vert -t}{2\left \vert \mathbf{k}\right \vert +\left \vert
\mathbf{b}\right \vert}%
\mid2\right)  \left(  b_{1}-t\right)  _{\left \vert \mathbf{k}\right \vert }\\
\times%
{\displaystyle \prod \limits_{j=1}^{d}}
\left \{  \dfrac{k_{j}!i^{-k_{j}}}{\left(  \left \vert \mathbf{k}^{j+1}%
\right \vert +2a_{1}+\dfrac{d-j}{2}\right)  _{k_{j}}\left(  \left \vert
\mathbf{k}^{j+1}\right \vert +\left \vert \mathbf{a}\right \vert +\dfrac{d-j}%
{2}\right)  _{k_{j}}}\right. \\
\times p_{k_{j}}\left(  -\dfrac{ix_{j}}{2};a_{1}+\dfrac{\left \vert
\mathbf{k}^{j+1}\right \vert }{2}+\dfrac{d-j}{4},a_{2}+\dfrac{\left \vert
\mathbf{k}^{j+1}\right \vert }{2}+\dfrac{d-j}{4}\right. \\
\left.  \left.  ,a_{2}+\dfrac{\left \vert \mathbf{k}^{j+1}\right \vert }%
{2}+\dfrac{d-j}{4},a_{1}+\dfrac{\left \vert \mathbf{k}^{j+1}\right \vert }%
{2}+\dfrac{d-j}{4}\right)  \right. \\
\times \left.  \Gamma \left(  a_{1}+\dfrac{\left \vert \mathbf{k}^{j+1}%
\right \vert -x_{j}}{2}+\dfrac{d-j}{4}\right)  \Gamma \left(  a_{1}%
+\dfrac{\left \vert \mathbf{k}^{j+1}\right \vert +x_{j}}{2}+\dfrac{d-j}%
{4}\right)  \right \}
\end{array}
\]
for $d\geq1$.
\end{theorem}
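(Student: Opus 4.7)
The plan is to apply the multivariate Parseval identity (\ref{multi}) to the pair of functions $g_{n,\mathbf{k}}(t,\boldsymbol{x};a_{1},b_{1},\beta,\mu)$ and $g_{m,\mathbf{l}}(t,\boldsymbol{x};a_{2},b_{2},\beta,\mu)$, whose Fourier transforms have just been computed in the previous theorem. On the spatial side of Parseval the integrand is $g_{n,\mathbf{k}}\,\overline{g_{m,\mathbf{l}}}$ over $\mathbb{R}^{d+1}$; substituting $u=e^{t}$ on the $t$-axis and $y_{j}=\tanh x_{j}$ on each remaining coordinate decouples the integral. The $u$-part reduces, after factoring the weight $e^{-u}u^{\beta+2|\mathbf{k}|+2\mu+d-1}$, to a Laguerre orthogonality integral \eqref{ort-L}, which produces the $\delta_{n,m}$ and the Laguerre norm constant. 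The $\boldsymbol{y}$-part, after the Jacobian produces $(1-\|\boldsymbol{y}\|^{2})^{a_{1}+a_{2}-1/2}$ together with the Jacobi-type product factors built into $f_{d}$, becomes exactly the orthogonality integral for the Gegenbauer basis $P_{\mathbf{k}}^{\mu}$ on $\mathbb{B}^{d}$ with parameter $\mu_{\text{eff}}=a_{1}+a_{2}-\tfrac{1}{2}$, producing $\prod_{j}\delta_{k_{j},l_{j}}$ and the ball norm $h_{\mathbf{k}}^{(a_{1}+a_{2}-1/2)}$ from \eqref{Norm}.

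On the Fourier side, I would substitute the explicit formula (\ref{18b}) for each of $\mathcal{F}(g_{n,\mathbf{k}})$ and $\overline{\mathcal{F}(g_{m,\mathbf{l}})}$ and relabel the frequency variables $(\xi_{1},\ldots,\xi_{d},\xi_{d+1})$ as $(x_{1},\ldots,x_{d},t)$ to match the theorem's notation. The key bookkeeping observation is that $A_{n,\mathbf{k}}^{(d+1)}$ is defined so as to absorb all of the $\xi$-dependent pieces of $\mathcal{F}(g_{n,\mathbf{k}})$ modulo constant prefactors: the $_{2}F_{1}$ factor $\Lambda$ together with the Pochhammer $(b_{1}-t)_{|\mathbf{k}|}$ encodes $\Gamma(b_{1}+|\mathbf{k}|-t)$ via $\Gamma(b_{1}+|\mathbf{k}|-t)=\Gamma(b_{1}-t)(b_{1}-t)_{|\mathbf{k}|}$, and the product of $\Theta_{j}^{d}$'s is absorbed into the product of $_{3}F_{2}$'s and Beta functions (which expand as ratios of Gammas). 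Complex conjugation of $A_{n,\mathbf{k}}^{(d+1)}(it,i\boldsymbol{x};a_{1},a_{2},b_{1},b_{2})$ amounts to swapping $(a_{1},b_{1})\leftrightarrow(a_{2},b_{2})$ and flipping the signs of $t$ and $\boldsymbol{x}$, which matches the $\overline{\mathcal{F}(g_{m,\mathbf{l}})}$ factor on the right-hand side exactly.

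Equating the two sides of Parseval then yields the claimed identity, with the right-hand side being the product of: the factor $(2\pi)^{d+1}$ from (\ref{multi}); the Laguerre orthogonality constant $\Gamma(|\mathbf{b}|+n+|\mathbf{k}|)(n-|\mathbf{k}|)!/((2|\mathbf{k}|+|\mathbf{b}|)_{n-|\mathbf{k}|})^{2}$ after collecting all the Pochhammer symbols from (\ref{18b}); the ball-orthogonality constant $h_{\mathbf{k}}^{(a_{1}+a_{2}-1/2)}$; and the products $\prod_{j}$ of Gammas $\Gamma(|\mathbf{k}^{j+1}|+2a_{i}+(d-j)/2)$ and Pochhammer squares coming from the $\Theta_{j}^{d}$-prefactors $(2(|\mathbf{k}^{j+1}|+\mu+\frac{d-j}{2}))_{k_{j}}/k_{j}!$ after using the doubling identity $(2\alpha)_{k}=2^{2k}(\alpha)_{k}(\alpha+\tfrac{1}{2})_{k}$ and the Gamma-Pochhammer conversions $\Gamma(\alpha+k)=\Gamma(\alpha)(\alpha)_{k}$. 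The Hahn-polynomial form of $A_{n,\mathbf{k}}^{(d+1)}$ follows from the same rewriting of each $_{3}F_{2}$ through definition (\ref{hahn}) that was already used in Lemma \ref{prop:OPcone2}.

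The main obstacle is purely combinatorial accounting: tracking the powers of $2$ (contributed by $2^{b+|\mathbf{k}|-i\xi_{d+1}}$ from the $t$-integral, by $2^{2da+\cdots}$ from (\ref{18}), and by the doubling formula), and carefully verifying that the $\mu$-parameter $\mu_{\text{eff}}=a_{1}+a_{2}-\tfrac{1}{2}$ used in invoking the ball orthogonality matches both the parameters inside $A_{n,\mathbf{k}}^{(d+1)}$ and those inside the explicit $\Theta_{j}^{d}$ via the substitution $\mu\mapsto a_{1}+a_{2}-\tfrac{1}{2}$. Once all prefactors line up, the Kronecker deltas $\delta_{n,m}$ and $\delta_{k_{j},l_{j}}$ emerge automatically from the orthogonality relations on the spatial side of Parseval.
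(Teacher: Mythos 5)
Your proposal is correct and follows essentially the same route as the paper: apply the multivariate Parseval identity to a pair of functions of the form \eqref{100} with parameters $(a_{1},b_{1})$ and $(a_{2},b_{2})$, use the substitutions $u=e^{t}$, $v_{j}=\tanh x_{j}$ to reduce the spatial side to the Laguerre orthogonality \eqref{ort-L} and the ball orthogonality with effective parameter $\mu=a_{1}+a_{2}-\tfrac{1}{2}$ (and $\beta$ chosen so the exponent of $u$ matches, which the paper makes explicit as $\beta=b_{1}+b_{2}-2a_{1}-2a_{2}$ for $d=1$), and read off $A_{n,\mathbf{k}}^{(d+1)}$ as the $\xi$-dependent part of \eqref{18b}. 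The only presentational difference is that the paper carries out the $d=1$ case in full detail and then invokes iteration, whereas you describe the general-$d$ computation directly.
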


\begin{proof}
The proof follows by using the induction method. Let's start with the case $d=1$. In
this case, we get the specific function from (\ref{100})%
\begin{multline*}
g_{n,k_{1}}\left( t, x_{1};a_{1},b_{1},\beta_{1},\mu_{1}\right)    =\left(
1-\tanh^{2}x_{1}\right)  ^{a_{1}}L_{k_{1},n}^{\beta_{1},\mu_{1}}\left(
\tau_{1},\tau_{2}\right)  \exp \left(  -e^{t}/2+b_{1}t\right) \\
  =\left(  1-\tanh^{2}x_{1}\right)  ^{a_{1}}\ L_{n-k_{1}}^{2k_{1}+2\mu
_{1}+\beta_{1}}\left(  e^{t}\right)  \exp \left(  -e^{t}/2+b_{1}t+k_{1}%
t\right)  C_{k_{1}}^{\left(  \mu_{1}\right)  }\left(  \tanh x_{1}\right)
\end{multline*}
where $\tau_{1}=e^{t}, \tau_{2}=e^{t}\tanh x_{1}.$ By Parseval's identity%
\begin{multline*}
  4\pi^{2}%
{\displaystyle \int \limits_{-\infty}^{\infty}}
{\displaystyle \int \limits_{-\infty}^{\infty}}
g_{n,k_{1}}\left( t, x_{1};a_{1},b_{1},\beta_{1},\mu_{1}\right)  g_{m,l_{1}%
}\left( t, x_{1};a_{2},b_{2},\beta_{2},\mu_{2}\right)  dx_{1}dt\\
  =\int \limits_{-\infty}^{\infty}\int \limits_{-\infty}^{\infty}%
\mathcal{F}%
\left(  g_{n,k_{1}}\left( t, x_{1};a_{1},b_{1},\beta_{1},\mu_{1}\right)
\right)  \overline{%
\mathcal{F}%
\left(  g_{m,l_{1}}\left( t, x_{1};a_{2},b_{2},\beta_{2},\mu_{2}\right)
\right)  }d\xi_{1}d\xi_{2},
\end{multline*}
it follows%
\begin{multline*}
4\pi^{2}%
{\displaystyle \int \limits_{-\infty}^{\infty}}
{\displaystyle \int \limits_{-\infty}^{\infty}}
\left(  1-\tanh^{2}x_{1}\right)  ^{a_{1}+a_{2}}\exp \left(  -e^{t}+\left(
b_{1}+b_{2}\right)  t\right)  L_{k_{1},n}^{\beta_{1},\mu_{1}}\left(  \tau
_{1}, \tau_{2}\right)  L_{l_{1},m}^{\beta_{2},\mu_{2}}\left(  \tau_{1}%
, \tau_{2}\right)  dx_{1}dt\\
  =4\pi^{2}%
{\displaystyle \int \limits_{-\infty}^{\infty}}
{\displaystyle \int \limits_{-\infty}^{\infty}}
\left(  1-\tanh^{2}x_{1}\right)  ^{a_{1}+a_{2}}\exp \left(  -e^{t}+\left(
b_{1}+b_{2}\right)  t+\left(  k_{1}+l_{1}\right)  t\right) \\
  \times L_{n-k_{1}}^{2k_{1}+2\mu_{1}+\beta_{1}}\left(  e^{t}\right)
L_{m-l_{1}}^{2l_{1}+2\mu_{2}+\beta_{2}}\left(  e^{t}\right)  C_{k_{1}%
}^{\left(  \mu_{1}\right)  }\left(  \tanh x_{1}\right)  C_{l_{1}}^{\left(
\mu_{2}\right)  }\left(  \tanh x_{1}\right)  dx_{1}dt\\
  =4\pi^{2}%
{\displaystyle \int \limits_{0}^{\infty}}
L_{n-k_{1}}^{2k_{1}+2\mu_{1}+\beta_{1}}\left(  u\right)  L_{m-l_{1}}%
^{2l_{1}+2\mu_{2}+\beta_{2}}\left(  u\right)  e^{-u}u^{b_{1}+b_{2}+k_{1}%
+l_{1}-1}du\\
  \times%
{\displaystyle \int \limits_{-1}^{1}}
\left(  1-v^{2}\right)  ^{a_{1}+a_{2}-1}C_{k_{1}}^{\left(  \mu_{1}\right)
}\left(  v\right)  C_{l_{1}}^{\left(  \mu_{2}\right)  }\left(  v\right)  dv\\
  =\frac{2^{2\left(  a_{1}+a_{2}-1\right)  +b_{1}+b_{2}+k_{1}+l_{1}}\left(
2\mu_{1}\right)  _{k_{1}}\left(  2\mu_{2}\right)  _{l_{1}}\left(  2k_{1}%
+2\mu_{1}+\beta_{1}+1\right)  _{n-k_{1}}\left(  2l_{1}+2\mu_{2}+\beta
_{2}+1\right)  _{m-l_{1}}}{\left(  n-k_{1}\right)  !\left(  m-l_{1}\right)
!k_{1}!l_{1}!\Gamma \left(  2a_{1}\right)  \Gamma \left(  2a_{2}\right)  }\\
  \times%
{\displaystyle \int \limits_{-\infty}^{\infty}}
{\displaystyle \int \limits_{-\infty}^{\infty}}
\text{ }\Gamma \left(  a_{1}+\frac{i\xi_{1}}{2}\right)  \Gamma \left(
a_{1}-\frac{i\xi_{1}}{2}\right)  ~\overline{\Gamma \left(  a_{2}+\frac{i\xi
_{1}}{2}\right)  \Gamma \left(  a_{2}-\frac{i\xi_{1}}{2}\right)  }\\
  \times \Gamma \left(  b_{1}-i\xi_{2}\right)  ~\overline{\Gamma \left(
b_{2}-i\xi_{2}\right)  }\left(  b_{1}-i\xi_{2}\right)  _{k_{1}}\overline
{\left(  b_{2}-i\xi_{2}\right)  _{l_{1}}}\\
  \times~_{3}F_{2}\left(
\genfrac{}{}{0pt}{0}{-k_{1},\ k_{1}+2\mu_{1},\ a_{1}+\frac{i\xi_{1}}%
{2}}{2a_{1},\  \mu_{1}+1/2}%
\mid1\right)  ~\overline{_{3}F_{2}\left(
\genfrac{}{}{0pt}{0}{-l_{1},\ l_{1}+2\mu_{2},\ a_{2}+\frac{i\xi_{1}}%
{2}}{2a_{2},\  \mu_{2}+1/2}%
\mid1\right)  }\\
  \times~_{2}F_{1}\left(
\genfrac{}{}{0pt}{0}{-n+k_{1},\ b_{1}+k_{1}-i\xi_{2}}{2k_{1}+2\mu_{1}%
+\beta_{1}+1}%
\mid2\right)  ~\overline{_{2}F_{1}\left(
\genfrac{}{}{0pt}{0}{-m+l_{1},\ b_{2}+l_{1}-i\xi_{2}}{2l_{1}+2\mu_{2}%
+\beta_{2}+1}%
\mid2\right)  }d\xi_{1}d\xi_{2}.
\end{multline*}
By assuming
\begin{align*}
\mu_{1}  &  =\mu_{2}=a_{1}+a_{2}-\frac{1}{2}\\
\beta_{1}  &  =\beta_{2}=b_{1}+b_{2}-2a_{1}-2a_{2}
\end{align*}
and considering the orthogonality relations of (\ref{ort}) and (\ref{ort-L}),
it is seen that the special function%
\begin{multline*}
\ A_{n,k_{1}}^{\left(  2\right)  }\left( t, x_{1};a_{1},a_{2},b_{1}%
,b_{2}\right)   = \, \hyper{3}{2}{-k_{1},\ k_{1}+2\left(  a_{1}+a_{2}\right)-1,\ a_{1}+\frac{x_{1}}{2}}{a_{1}+a_{2},\ 2a_{1}}{1} \\
  \times \Gamma \left(  a_{1}-\frac{x_{1}}{2}\right)
\Gamma \left(  a_{1}+\frac{x_{1}}{2}\right)  \left(  b_{1}-t\right)  _{k_{1}} \hyper{2}{1}{-n+k_{1},\ b_{1}+k_{1}-t}{2k_{1}+b_{1}+b_{2}}{2} \\
  =\frac{k_{1}!i^{-n_{1}}}{\left(  2a_{1}\right)  _{k_{1}}\left(  a_{1}+a_{2}\right)  _{k_{1}}}\Gamma \left(  a_{1}-\frac{x_{1}}{2}\right) \Gamma \left(  a_{1}+\frac{x_{1}}{2}\right)  \left(  b_{1}-t\right)  _{k_{1}}\\
  \times~\hyper{2}{1}{-n+k_{1},\ b_{1}+k_{1}-t}{2k_{1}+b_{1}+b_{2}}{2} p_{k_{1}}\left(  \frac{-ix_{1}}{2};a_{1},a_{2},a_{2},a_{1}\right)
\end{multline*}
has the orthogonality relation%
\begin{multline*}
\int \limits_{-\infty}^{\infty}\int \limits_{-\infty}^{\infty}\Gamma \left(
b_{1}-it\right)  \Gamma \left(  b_{2}+it\right)  \ A_{n,k_{1}}^{\left(
2\right)  }\left( it, ix_{1};a_{1},a_{2},b_{1},b_{2}\right)  \\ \times A_{m,l_{1}%
}^{\left(  2\right)  }\left(  -it,-ix_{1};a_{2},a_{1},b_{2},b_{1}\right)
dx_{1}dt \\
=\frac{4\pi^{2}2^{-\left(  2a_{1}+2a_{2}+b_{1}+b_{2}+2k_{1}\right)
+2}h_{k_{1}}^{\left(  a_{1}+a_{2}-\frac{1}{2}\right)  }\left(  k_{1}!\right)
^{2}\left(  n-k_{1}\right)  !}{\left(  2k_{1}+b_{1}+b_{2}\right)  _{n-k_{1}%
}^{2}\left(  2a_{1}+2a_{2}-1\right)  _{k_{1}}^{2}} \\ \times \Gamma \left(  b_{1}+b_{2}+n+k_{1}\right)  \Gamma \left(
2a_{1}\right)  \Gamma \left(  2a_{2}\right)  \delta_{k_{1},l_{1}}\delta_{n,m}%
\end{multline*}
where $h_{k_{1}}^{\left(  a_{1}+a_{2}-\frac{1}{2}\right)  }$ is given by
(\ref{gnorm}). Similarly, the result follows by iteration if we substitute (\ref{15}) and (\ref{18}) in
Parseval's identity (\ref{multi}).
\end{proof}

\subsection{The Fourier transform of Jacobi polynomials on the cone}
Let us consider a function in terms of Jacobi polynomials on the cone \eqref{eq:jacobicone} as

\begin{multline}
g_{n,\mathbf{k}}\left( t, \boldsymbol{x};a,b,c,\beta,\mu,\gamma \right)    :=g_{n,k_{1},\dots,k_{d}}\left(
t,x_{1},\dots,x_{d};a,b,c,\beta,\mu,\gamma \right)  \\
 =\prod \limits_{j=1}^{d}\left(  1-\tanh^{2}x_{j}\right)  ^{a+\frac{d-j}{4}%
}(1+\tanh t)^{b}(1-\tanh t)^{c}{\mathsf{J}}_{{\mathbf{k}},n}^{\beta,\mu,\gamma}\left(  \tau_{1},\dots,\tau_{d},\tau_{d+1}\right),
\end{multline}
for $d\geq1$, where $a,b,c,\beta,\mu,\gamma$ real parameters and
\begin{align*}
\tau_{1}\left( t \right) & =\tau_{1}=\left( \frac{1+\tanh t}{2}\right) \\
\tau_{2}\left( t, x_{1}\right)   & =\tau_{2}=\left( \frac{1+\tanh t}{2}\right) \tanh x_{1},\\
\tau_{d+1}\left( t, x_{1},\dots,x_{d}\right)   & =\tau_{d+1}=\left( \frac{1+\tanh t}{2}\right) \\& \times \tanh x_{d}%
\sqrt{\left(  1-\tanh^{2}x_{1}\right)  \left(  1-\tanh^{2}x_{2}\right)
\dots \left(  1-\tanh^{2}x_{d-1}\right)  },
\end{align*}
for $d\geq1$.
In explicit form, we can write
\begin{multline*}
g_{n,\mathbf{k}}\left( t, \boldsymbol{x};a,b,c,\beta,\mu,\gamma \right)  =\frac{1}{2^{\left \vert \mathbf{k}\right \vert} }(1+\tanh t)^{b+\left \vert \mathbf{k}\right \vert}(1-\tanh t)^{c}\ P_{n-\left \vert \mathbf{k}\right \vert }^{(2\left \vert \mathbf{k}\right \vert+2\mu+\beta+d-1,\gamma)}\left(-\tanh t\right)\\
\times \prod \limits_{j=1}^{d}\left(  1-\tanh^{2}x_{j}\right)  ^{a+\frac{d-j}{4}} \prod \limits_{j=1}^{d-1}\left(  1-\tanh^{2}x_{j}\right)  ^\frac{k_{j+1}+\cdots+k_{d}}{2}
\prod \limits_{j=1}^{d}C_{k_{j}}^{\left(  \lambda_{j} \right)}\left(\tanh x_{j}\right)
\end{multline*}
where $\lambda_{j}=\mu+\left \vert \mathbf{k}^{j+1}\right \vert +\frac{d-j}{2}$, or
\begin{multline}
g_{n,\mathbf{k}}\left( t, \boldsymbol{x};a,b,c,\beta,\mu,\gamma \right) =\frac{1}{2^{\left \vert \mathbf{k}\right \vert} }(1+\tanh t)^{b+\left \vert \mathbf{k}\right \vert}(1-\tanh t)^{c}\\ \times P_{n-\left \vert \mathbf{k}\right \vert }^{(2\left \vert \mathbf{k}\right \vert+2\mu+\beta+d-1,\gamma)}\left(-\tanh t\right)
 f_{d}\left(
x_{1},\dots,x_{d};k_{1},\dots,k_{d},a,\mu \right)\label{jac-spec-func}
\end{multline}
where $f_{d}\left(x_{1},\dots,x_{d};k_{1},\dots,k_{d},a,\mu \right)$ is defined  in \eqref{150}.

We now calculate the Fourier transform of the function $g_{n,\mathbf{k}}\left( t, \boldsymbol{x};a,b,c,\beta,\mu,\gamma \right)$ given by (\ref{jac-spec-func}) by using Lemma \ref{prop:OPcone2}.

\begin{theorem}
The Fourier transform of the function $g_{n,\mathbf{k}}\left( t, \boldsymbol{x};a,b,c,\beta,\mu,\gamma \right)$ is given explicitly as follows%
\begin{multline}%
\mathcal{F}
\left(  g_{n,\mathbf{k}}\left( t,\boldsymbol{x};a,b,c,\beta,\mu,\gamma \right)  \right)  :=%
\mathcal{F}
\left(  g_{n,\mathbf{k}}\left( t, x_{1},\dots,x_{d};a,b,c,\beta,\mu,\gamma\right)  \right) \\
=2^{b+c-1+2ad+\frac{d\left(  d-5\right)  }{4}+\text{$
{\textstyle \sum \limits_{j=1}^{d-1}}
$}{ jk}_{j+1}}\frac{ \left(  2 \left\vert \mathbf{k} \right\vert + 2\mu + \beta + d \right)_{n - \left\vert \mathbf{k} \right\vert} \Gamma \left( b + \left\vert \mathbf{k} \right\vert - \frac{i\xi_{d+1}}{2} \right)\Gamma \left( c + \frac{i\xi_{d+1}}{2} \right)}{\left( n - \left\vert \mathbf{k} \right\vert \right)!\Gamma \left( \left\vert \mathbf{k} \right\vert +b+c\right)}   \\
\times \prod \limits_{j=1}^{d}\left \{  \frac{\left(  2\left(
\left \vert \mathbf{k}^{j+1}\right \vert +\mu+\frac{d-j}{2}\right)  \right)
_{k_{j}}}{k_{j}!} \Theta_{j}^{d}\left(  a,\mu,\mathbf{k};\xi_{j}\right)
\right \} \Xi \left(  n,\mathbf{k},b,c,\mu,\beta,\gamma ,\xi_{d+1}\right)  \label{Jac-Fourier}%
\end{multline}
where
\begin{align*}
\Xi \left(  n,\mathbf{k},b,c,\mu,\beta,\gamma ,\xi_{d+1}\right) = _{3}F_{2}\left(\genfrac{.}{\vert}{0pt}{}{-n+\left \vert \mathbf{k}\right \vert , n+\left \vert \mathbf{k}\right \vert+2\mu+\beta+\gamma+d, \left \vert \mathbf{k}\right \vert+b -\frac{i\xi_{d+1}}{2}}{2\left \vert \mathbf{k}\right \vert
+2\mu+\beta+d, \left \vert \mathbf{k}\right \vert+b+c}1\right)
\end{align*}
and
\begin{align*}
\Theta_{j}^{d}\left(  a,\mu,\mathbf{k};\xi_{j}\right)  =B\left(
a+\frac{\left \vert \mathbf{k}^{j+1}\right \vert +i\xi_{j}}{2}+\frac{d-j}%
{4},a+\frac{\left \vert \mathbf{k}^{j+1}\right \vert -i\xi_{j}}{2}+\frac{d-j}%
{4}\right)   & \\
\times \ _{3}F_{2}\left(\genfrac{}{}{0pt}{0}{-k_{j},\ k_{j}+2\left(  \left \vert \mathbf{k}%
^{j+1}\right \vert +\mu+\frac{d-j}{2}\right)  ,\ a+\frac{\left \vert
\mathbf{k}^{j+1}\right \vert +i\xi_{j}}{2}+\frac{d-j}{4}}{\left \vert
\mathbf{k}^{j+1}\right \vert +\mu+\frac{d-j+1}{2},\  \left \vert \mathbf{k}%
^{j+1}\right \vert +2a+\frac{d-j}{2}}
\mid1\right)  ,  &
\end{align*}
or in terms of the continuous Hahn polynomials (\ref{hahn})%
\begin{align*}
\Theta_{j}^{d}\left(  a,\mu,\mathbf{k};\xi_{j}\right)  =\frac{k_{j}!}%
{i^{k_{j}}\left(  \left \vert \mathbf{k}^{j+1}\right \vert +\mu+\frac{d-j+1}%
{2}\right)  _{k_{j}}\left(  \left \vert \mathbf{k}^{j+1}\right \vert
+2a+\frac{d-j}{2}\right)  _{k_{j}}}  & \\
\times B\left(  a+\frac{\left \vert \mathbf{k}^{j+1}\right \vert +i\xi_{j}}%
{2}+\frac{d-j}{4},a+\frac{\left \vert \mathbf{k}^{j+1}\right \vert -i\xi_{j}}%
{2}+\frac{d-j}{4}\right)   & \\
\times p_{k_{j}}\left(  \frac{\xi_{j}}{2};a+\frac{\left \vert \mathbf{k}%
^{j+1}\right \vert }{2}+\frac{d-j}{4},\mu-a+\frac{\left \vert \mathbf{k}%
^{j+1}\right \vert +1}{2}+\frac{d-j}{4}\right.   & \\
\left.  ,\mu-a+\frac{\left \vert \mathbf{k}^{j+1}\right \vert +1}{2}+\frac
{d-j}{4},a+\frac{\left \vert \mathbf{k}^{j+1}\right \vert }{2}+\frac{d-j}%
{4}\right)  .  &
\end{align*}

\end{theorem}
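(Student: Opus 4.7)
The strategy mirrors the Laguerre-cone case just proved. From (\ref{jac-spec-func}), the function factorizes as $g_{n,\mathbf{k}}(t,\boldsymbol{x};a,b,c,\beta,\mu,\gamma)=\tfrac{1}{2^{|\mathbf{k}|}}h(t)\,f_d(\boldsymbol{x};\mathbf{k},a,\mu)$, where $h(t)=(1+\tanh t)^{b+|\mathbf{k}|}(1-\tanh t)^{c}P_{n-|\mathbf{k}|}^{(\alpha,\gamma)}(-\tanh t)$ with $\alpha=2|\mathbf{k}|+2\mu+\beta+d-1$. By Fubini the $(d+1)$-dimensional Fourier transform splits as $\mathcal{F}(g_{n,\mathbf{k}})=\tfrac{1}{2^{|\mathbf{k}|}}\,\mathcal{F}(f_d)(\xi_1,\dots,\xi_d)\cdot I(\xi_{d+1})$, where $\mathcal{F}(f_d)$ is supplied by Lemma \ref{prop:OPcone2} (this also yields the $\Theta_j^d$ factors, both in hypergeometric and in Hahn form, with no extra work) and $I(\xi_{d+1})=\int_{-\infty}^{\infty}e^{-i\xi_{d+1}t}h(t)\,dt$ is a univariate integral that carries all the new information.

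To evaluate $I(\xi_{d+1})$, I would first substitute $s=\tanh t$, using $dt=ds/(1-s^2)$ and $e^{-i\xi_{d+1}t}=(1+s)^{-i\xi_{d+1}/2}(1-s)^{i\xi_{d+1}/2}$, to obtain $I(\xi_{d+1})=\int_{-1}^{1}(1+s)^{b+|\mathbf{k}|-i\xi_{d+1}/2-1}(1-s)^{c+i\xi_{d+1}/2-1}P_{n-|\mathbf{k}|}^{(\alpha,\gamma)}(-s)\,ds$. A second substitution $s=2u-1$ brings this to the standard Jacobi-beta form $2^{b+c+|\mathbf{k}|-1}\int_{0}^{1}u^{b+|\mathbf{k}|-i\xi_{d+1}/2-1}(1-u)^{c+i\xi_{d+1}/2-1}P_{n-|\mathbf{k}|}^{(\alpha,\gamma)}(1-2u)\,du$. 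Expanding the Jacobi polynomial by its hypergeometric representation and integrating term by term via the beta integral $B(b+|\mathbf{k}|-i\xi_{d+1}/2+j,\,c+i\xi_{d+1}/2)$ recognises the resulting series as ${}_3F_2$ evaluated at $1$; after identifying $\alpha+1$ with $2|\mathbf{k}|+2\mu+\beta+d$ and $N+\alpha+\gamma+1$ with $n+|\mathbf{k}|+2\mu+\beta+\gamma+d$, this is precisely $\Xi(n,\mathbf{k},b,c,\mu,\beta,\gamma,\xi_{d+1})$, accompanied by the prefactor $\Gamma(b+|\mathbf{k}|-i\xi_{d+1}/2)\Gamma(c+i\xi_{d+1}/2)(2|\mathbf{k}|+2\mu+\beta+d)_{n-|\mathbf{k}|}/[(n-|\mathbf{k}|)!\,\Gamma(|\mathbf{k}|+b+c)]$.

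Combining this $t$-integral with Lemma \ref{prop:OPcone2} for $\mathcal{F}(f_d)$ and merging the constants, in particular $\tfrac{1}{2^{|\mathbf{k}|}}\cdot 2^{b+c+|\mathbf{k}|-1}=2^{b+c-1}$ together with the lemma's $2^{2ad+d(d-5)/4+\sum_{j=1}^{d-1}jk_{j+1}}$, recovers the right-hand side of (\ref{Jac-Fourier}). The main obstacle is purely bookkeeping: tracking the accumulated powers of $2$ through both substitutions and the prefactor in (\ref{jac-spec-func}) so that the final exponent collapses to $b+c-1+2ad+\tfrac{d(d-5)}{4}+\sum_{j=1}^{d-1}jk_{j+1}$, and aligning the parameters of the resulting ${}_3F_2$ with the specific ordering written inside $\Xi$. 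No convergence issue arises because the assumptions $\Re(b+|\mathbf{k}|)>0$ and $\Re(c)>0$ together with $\xi_{d+1}\in\mathbb{R}$ render the transformed integrand absolutely integrable on $[0,1]$, justifying the term-by-term integration.
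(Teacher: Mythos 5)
Your proposal is correct and follows essentially the same route as the paper: factor $g_{n,\mathbf{k}}$ into $f_d$ times a univariate piece, invoke Lemma \ref{prop:OPcone2} for the $\boldsymbol{x}$-integral, reduce the $t$-integral to $2^{b+c+|\mathbf{k}|-1}\int_0^1 u^{b+|\mathbf{k}|-i\xi_{d+1}/2-1}(1-u)^{c+i\xi_{d+1}/2-1}P_{n-|\mathbf{k}|}^{(\alpha,\gamma)}(1-2u)\,du$ (the paper performs your two substitutions in the single step $u=\tfrac{1+\tanh t}{2}$), and expand the Jacobi polynomial term by term against the Beta integral to produce the $\Gamma$-prefactor and the ${}_3F_2$ defining $\Xi$. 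The bookkeeping of powers of $2$ and the ${}_3F_2$ parameters you describe matches the paper's computation exactly.
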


\begin{proof}
It follows from (\ref{jac-spec-func})
\begin{multline*}
\mathcal{F}\left(  g_{n,\mathbf{k}}\left( t, \boldsymbol{x};a,b,c,\beta,\mu,\gamma \right) \right)   \\
={\displaystyle \int \limits_{-\infty}^{\infty}}
{\displaystyle \int \limits_{-\infty}^{\infty}}\cdots {\displaystyle \int \limits_{-\infty}^{\infty}}
g_{n,\mathbf{k}}\left( t,\boldsymbol{x};a,b,c,\beta,\mu,\gamma \right) e^{-i\left(  \xi_{1}x_{1}+\cdots +\xi_{d}x_{d}+\xi_{d+1}t\right)  }d\boldsymbol{x}dt  \\
    ={\displaystyle \int \limits_{0}^{1}}
{\displaystyle \int \limits_{-\infty}^{\infty}}\cdots {\displaystyle \int \limits_{-\infty}^{\infty}}
e^{-i\left(  \xi_{1}x_{1}+\cdots +\xi_{d}x_{d}\right)}f_{d}\left(
x_{1},\dots,x_{d};k_{1},\dots,k_{d},a,\mu \right)   \\
   \times 2^{b+c-1} P_{n-\left \vert \mathbf{k}\right \vert }^{(2\left \vert \mathbf{k}\right \vert+2\mu+\beta+d-1,\gamma)}\left(1-2u\right) u^{b+\left \vert \mathbf{k}\right \vert-\frac{i\xi_{d+1}}{2}-1}(1-u)^{c+\frac{i\xi_{d+1}}{2}-1}d\boldsymbol{x}du \\
   =\mathcal{F}\left(f_{d}\left(
x_{1},\dots,x_{d};k_{1},\dots,k_{d},a,\mu \right)\right)\nonumber\\
  \times 2^{b+c-1}{\displaystyle \int \limits_{0}^{1}} P_{n-\left \vert \mathbf{k}\right \vert }^{(2\left \vert \mathbf{k}\right \vert+2\mu+\beta+d-1,\gamma)}\left(1-2u\right) u^{b+\left \vert \mathbf{k}\right \vert-\frac{i\xi_{d+1}}{2}-1}(1-u)^{c+\frac{i\xi_{d+1}}{2}-1}du.
\end{multline*}
From the definition of Jacobi polynomials (\ref{200}) and Gamma function, we can write
\begin{multline}
\mathcal{F} \left(  g_{n,\mathbf{k}}\left( t, \boldsymbol{x};a,b,c,\beta,\mu,\gamma \right) \right)  = \mathcal{F}\left(f_{d}\left(
x_{1},\dots,x_{d};k_{1},\dots ,k_{d},a,\mu \right)\right) \\
\times \frac{2^{b+c-1}\left(  2\left \vert \mathbf{k}\right \vert +2\mu+\beta+d\right)_{n-\left \vert \mathbf{k}\right \vert }}{\left(  n-\left \vert \mathbf{k}
\right \vert \right)  !}
  {\displaystyle \sum \limits_{j=0}^{n-\left \vert \mathbf{k}\right \vert }}
\frac{\left(  -n+\left \vert \mathbf{k}\right \vert \right)  _{j}\left(  n+\left \vert \mathbf{k}\right \vert +2\mu+\beta+\gamma+d\right)
_{j}}{\left(
2\left \vert \mathbf{k}\right \vert +2\mu+\beta+d\right)  _{j}~j!}\\
   {\displaystyle \times \int \limits_{0}^{1}}
u^{j+b+\left \vert \mathbf{k}\right \vert -1-\frac{i\xi_{d+1}}{2}}(1-u)^{c-1+\frac{i\xi_{d+1}}{2}}du \nonumber \\
   =\mathcal{F}\left(f_{d}\left(
x_{1},\dots,x_{d};k_{1},\dots,k_{d},a,\mu \right)\right)2^{b+c-1}\left(  2\left \vert \mathbf{k}\right \vert +2\mu+\beta+d\right)
_{n-\left \vert \mathbf{k}\right \vert } \\
   \times \frac{\Gamma\left(  \left \vert \mathbf{k}\right \vert +b-\frac{i\xi_{d+1}}{2} \right)\Gamma\left( c+\frac{i\xi_{d+1}}{2} \right)
}{\left(  n-\left \vert \mathbf{k}%
\right \vert \right)  !\Gamma\left(  \left \vert \mathbf{k}\right \vert +b+c \right)}  \\
   \times  _{3}F_{2}\left(\genfrac{.}{\vert}{0pt}{}{-n+\left \vert \mathbf{k}\right \vert , n+\left \vert \mathbf{k}\right \vert+2\mu+\beta+\gamma+d, \left \vert \mathbf{k}\right \vert+b -\frac{i\xi_{d+1}}{2}}{2\left \vert \mathbf{k}\right \vert
+2\mu+\beta+d, \left \vert \mathbf{k}\right \vert+b+c}1\right).
\end{multline}
By using the equation (\ref{18}), the proof is completed.

Particularly, in the case $d=1$, the special function is in the form
\begin{align}
g_{n,k_{1}}\left( t, x_{1};a,b,c,\beta,\mu,\gamma  \right)    &  =\left(  1-\tanh^{2}x_{1}\right)^{a}(1+\tanh t)^{b}(1-\tanh t)^{c} {\mathsf{J}}_{k_{1},n}^{\beta,\mu,\gamma}\left(\tau_{1}, \tau_{2}\right)\nonumber \\
   &  =\left(  1-\tanh^{2}x_{1}\right)^{a}(1+\tanh t)^{b}(1-\tanh t)^{c}(\frac{1+\tanh t}{2})^{k_{1}}\\
      &   \times P_{n-k_{1}}^{(2k_{1}+2\mu+\beta,\gamma)}\left( -\tanh t \right)
\ C_{k_{1}}^{\left(  \mu \right)  }\left(  \tanh x_{1}\right),\nonumber
\label{1-dim}%
\end{align}
where $\tau_{1}=\left( \frac{1+\tanh t}{2}\right) $ and $\tau_{2}=\left( \frac{1+\tanh t}{2}\right) \tanh x_{1}.$ The Fourier transform of this function is
\begin{multline}
\mathcal{F}
\left( g_{n,k_{1}}\left( t, x_{1};a,b,c,\beta,\mu,\gamma  \right) \right)    =\int
\limits_{-\infty}^{\infty}\int
\limits_{-\infty}^{\infty}e^{-i\xi_{1}x_{1}-i\xi_{2}t}\left(  1-\tanh^{2}x_{1}\right)^{a}(1+\tanh t)^{b} \\
   \times (1-\tanh t)^{c} \left(\frac{1+\tanh t}{2} \right)^{k_{1}}P_{n-k_{1}}^{(2k_{1}+2\mu+\beta,\gamma)}\left( -\tanh t \right)
\ C_{k_{1}}^{\left(  \mu \right)  }\left(  \tanh x_{1}\right)dx_{1}dt \\
   =2^{b+c+2a-2}\frac{ \left(  2 k_{1} + 2\mu + \beta + 1 \right)_{n-k_{1}} \Gamma \left( b + k_{1} - \frac{i\xi_{2}}{2} \right)\Gamma \left( c + \frac{i\xi_{2}}{2} \right)(2\mu)_{k_{1}}}{\left( n-k_{1} \right)!{k_{1}}!\Gamma \left( k_{1} +b+c\right)}   \\
  \times \Theta_{1}^{1}\left(  a,\mu,k_{1};\xi_{1}\right) \Xi \left(  n,k_{1},b,c,\mu,\beta,\gamma ,\xi_{2}\right)
\end{multline}
where%
\[
\Theta_{1}^{1}\left(  a,\mu,k_{1};\xi_{1}\right)  =B\left(  a+\frac{i\xi_{1}}{2},a-\frac{i\xi_{1}}{2}\right) ~_{3}F_{2}\left(\genfrac{.}{\vert}{0pt}{}{-k_{1},~k_{1}+2\mu,~a+\frac{i\xi_{1}}{2}}{\mu+\frac{1}%
{2},~2a}~1\right),
\]
and
\[
\Xi \left(  n,k_{1},b,c,\mu,\beta,\gamma ,\xi_{2}\right) = _{3}F_{2}\left(\genfrac{.}{\vert}{0pt}{}{-n+k_{1}, n+k_{1}+2\mu+\beta+\gamma+1, k_{1}+b -\frac{i\xi_{2}}{2}}{2k_{1}+2\mu+\beta+1, k_{1}+b+c}1\right)
\]

It can be rewritten in terms of the continuous Hahn polynomials
$p_{n}\left(  x;a,b,c,d\right)  $ as
\begin{multline*}%
\mathcal{F}\left( g_{n,k_{1}}\left( t, x_{1};a,b,c,\beta,\mu,\gamma \right) \right)  \\
= 2^{b+c+2a-2} \frac{ \left( 2 k_{1} + 2\mu + \beta + 1 \right)_{n-k_{1}} \Gamma \left( b + k_{1} - \frac{i\xi_{2}}{2} \right) \Gamma \left( c + \frac{i\xi_{2}}{2} \right) (2\mu)_{k_{1}}}{\Gamma \left( k_{1} + b + c \right) i^{n} (2a)_{k_{1}} \left( \mu + \frac{1}{2} \right)_{k_{1}}}\\
\times \frac{B \left( a + \frac{i\xi_{1}}{2}, a - \frac{i\xi_{1}}{2} \right)}{\left( 2k_{1} + 2\mu + \beta + 1 \right)_{n-k_{1}} \left( k_{1} + b + c \right)_{n-k_{1}} }\\
 \times p_{n-k_{1}}\left( \frac{-\xi_{2}}{2}; k_{1} + b, \gamma - c + 1, k_{1} + 2\mu + \beta + 1 - b, c \right)\\
 \times p_{k_{1}}\left( \frac{\xi_{1}}{2}; a, \mu - a + \frac{1}{2}, \mu - a + \frac{1}{2}, a \right).
\end{multline*}

\end{proof}

\subsection{The class of special functions using Fourier transform of the
Jacobi polynomials on the cone}

We now apply Parseval's identity to the using Fourier transform of the Jacobi polynomials on the cone.

\begin{theorem}
Let $\mathbf{k}$ and $\mathbf{k}^{j}$be as in (\ref{notation}) and, let
$\mathbf{a=}\left(  a_{1},a_{2}\right)  $, $\left \vert \mathbf{a}\right \vert
=a_{1}+a_{2},~\mathbf{b=}\left(  b_{1},b_{2}\right)  $, $\left \vert
\mathbf{b}\right \vert =b_{1}+b_{2}$, $\mathbf{c=}\left(  c_{1},c_{2}\right)  $
and $\left \vert \mathbf{c}\right \vert =c_{1}+c_{2}.$ The following equality is
satisfied%
\begin{multline*}
  \int \limits_{-\infty}^{\infty}\cdots \int \limits_{-\infty}^{\infty}%
\  \Gamma \left(  b_{1}-\frac{it}{2}\right)  \Gamma \left(  b_{2}+\frac{it}%
{2}\right)  \Gamma \left(  c_{1}+\frac{it}{2}\right)  \Gamma \left(  c_{2}%
-\frac{it}{2}\right) \\ \times B_{n,\mathbf{k}}^{\left(  d+1\right)
}\left(  it,i\boldsymbol{x};a_{1},a_{2},b_{1},b_{2},c_{1},c_{2}\right)
   \ B_{m,\mathbf{l}}^{\left(  d+1\right)  }\left(
\boldsymbol{-}it,-i\boldsymbol{x};a_{2},a_{1},b_{2},b_{1},c_{2},c_{1}\right)
d\boldsymbol{x}dt\\
  =\left(  2\pi \right)  ^{d+1}2^{-2d\left \vert \mathbf{a}\right \vert
+d+2}h_{\mathbf{k}}^{\left(  a_{1}+a_{2}-\frac{1}{2}\right)  }\left(
n-\left \vert \mathbf{k}\right \vert \right)  !\\
  \times \frac{\Gamma \left(  n+\left \vert \mathbf{k}\right \vert +\left \vert
\mathbf{b}\right \vert \right)  \Gamma \left(  n-\left \vert \mathbf{k}%
\right \vert +\left \vert \mathbf{c}\right \vert \right)  \Gamma \left(
\left \vert \mathbf{k}\right \vert +b_{1}+c_{1}\right)  \Gamma \left(  \left \vert
\mathbf{k}\right \vert +b_{2}+c_{2}\right)  }{\left(  \left(  2\left \vert
\mathbf{k}\right \vert +\left \vert \mathbf{b}\right \vert \right)
_{n-\left \vert \mathbf{k}\right \vert }\right)  ^{2}\left(  2n+\left \vert
\mathbf{b}\right \vert +\left \vert \mathbf{c}\right \vert -1\right)
\Gamma \left(  n+\left \vert \mathbf{k}\right \vert +\left \vert \mathbf{b}%
\right \vert +\left \vert \mathbf{c}\right \vert -1\right)  }\delta_{n,m}\\
  \times \prod \limits_{j=1}^{d}\frac{\left(  k_{j}!\right)  ^{2}\Gamma \left(
\left \vert \mathbf{k}^{j+1}\right \vert +2a_{1}+\frac{d-j}{2}\right)
\Gamma \left(  \left \vert \mathbf{k}^{j+1}\right \vert +2a_{2}+\frac{d-j}%
{2}\right)  }{2^{2\left \vert \mathbf{k}^{j+1}\right \vert }\left(  \left(
2\left \vert \mathbf{k}^{j+1}\right \vert +2\left \vert \mathbf{a}\right \vert
+d-j-1\right)  _{k_{j}}\right)  ^{2}}\delta_{k_{j},l_{j}}%
\end{multline*}
for $a_{1},a_{2},b_{1},b_{2},c_{1},c_{2}>0$ where $h_{\mathbf{k}}^{\left(
a_{1}+a_{2}-\frac{1}{2}\right)  }$ is given by (\ref{Norm}) and%
\begin{multline*}
 B_{n,\mathbf{k}}^{\left(  d+1\right)  }\left(
t,\boldsymbol{x};a_{1},a_{2},b_{1},b_{2},c_{1},c_{2}\right)  \\
=~_{3}F_{2}\left(  \left.
\genfrac{}{}{0pt}{0}{-n+\left \vert \mathbf{k}\right \vert ,\ n+\left \vert
\mathbf{k}\right \vert +\left \vert \mathbf{b}\right \vert +\left \vert
\mathbf{c}\right \vert -1,~\left \vert \mathbf{k}\right \vert +b_{1}-\frac{t}%
{2}}{2\left \vert \mathbf{k}\right \vert +\left \vert \mathbf{b}\right \vert
,~\left \vert \mathbf{k}\right \vert +b_{1}+c_{1}}%
~\right \vert ~1\right)  \\
  \times \left(  b_{1}-\frac{t}{2}\right)  _{\left \vert \mathbf{k}\right \vert
}\prod \limits_{j=1}^{d}\left \{  \Gamma \left(  a_{1}+\frac{\left \vert
\mathbf{k}^{j+1}\right \vert -x_{j}}{2}+\frac{d-j}{4}\right)  \Gamma \left(
a_{1}+\frac{\left \vert \mathbf{k}^{j+1}\right \vert +x_{j}}{2}+\frac{d-j}%
{4}\right)  \right.  \\
  \times \left.  _{3}F_{2}\left(  \left.
\genfrac{}{}{0pt}{0}{-k_{j},\ k_{j}+2\left(  \left \vert \mathbf{k}%
^{j+1}\right \vert +\left \vert \mathbf{a}\right \vert +\frac{d-j-1}{2}\right)
,\ a_{1}+\frac{\left \vert \mathbf{k}^{j+1}\right \vert +x_{j}}{2}+\frac{d-j}%
{4}}{\left \vert \mathbf{k}^{j+1}\right \vert +\left \vert \mathbf{a}\right \vert
+\frac{d-j}{2},\  \left \vert \mathbf{k}^{j+1}\right \vert +2a_{1}+\frac{d-j}{2}}%
~\right \vert ~1\right)  \right \}
\end{multline*}
or, in terms of Hahn polynomials (\ref{hahn})%
\begin{multline*}
\ B_{n,\mathbf{k}}^{\left(  d+1\right)  }\left(  t,\boldsymbol{x}%
;a_{1},a_{2},b_{1},b_{2},c_{1},c_{2}\right)  \\
=\dfrac{\left(  n-\left \vert
\mathbf{k}\right \vert \right)  !i^{\left \vert \mathbf{k}\right \vert -n}\left(
b_{1}-\frac{t}{2}\right)  _{\left \vert \mathbf{k}\right \vert }}{\left(
2\left \vert \mathbf{k}\right \vert +\left \vert \mathbf{b}\right \vert \right)
_{n-\left \vert \mathbf{k}\right \vert }\left(  \left \vert \mathbf{k}\right \vert
+b_{1}+c_{1}\right)  _{n-\left \vert \mathbf{k}\right \vert }}
 p_{n-\left \vert \mathbf{k}\right \vert }\left(  \dfrac{it}%
{2};~\left \vert \mathbf{k}\right \vert +b_{1},c_{2},\left \vert \mathbf{k}%
\right \vert +b_{2},c_{1}\right)  \\
\times%
{\displaystyle \prod \limits_{j=1}^{d}}
\left \{  \dfrac{k_{j}!i^{-k_{j}}}{\left(  \left \vert \mathbf{k}^{j+1}%
\right \vert +2a_{1}+\dfrac{d-j}{2}\right)  _{k_{j}}\left(  \left \vert
\mathbf{k}^{j+1}\right \vert +\left \vert \mathbf{a}\right \vert +\dfrac{d-j}%
{2}\right)  _{k_{j}}}\right.  \\
\times p_{k_{j}}\left(  -\dfrac{ix_{j}}{2};a_{1}+\dfrac{\left \vert
\mathbf{k}^{j+1}\right \vert }{2}+\dfrac{d-j}{4},a_{2}+\dfrac{\left \vert
\mathbf{k}^{j+1}\right \vert }{2}+\dfrac{d-j}{4}\right.  \\
\left.  \left.  ,a_{2}+\dfrac{\left \vert \mathbf{k}^{j+1}\right \vert }%
{2}+\dfrac{d-j}{4},a_{1}+\dfrac{\left \vert \mathbf{k}^{j+1}\right \vert }%
{2}+\dfrac{d-j}{4}\right)  \right.  \\
\times \left.  \Gamma \left(  a_{1}+\dfrac{\left \vert \mathbf{k}^{j+1}%
\right \vert -x_{j}}{2}+\dfrac{d-j}{4}\right)  \Gamma \left(  a_{1}%
+\dfrac{\left \vert \mathbf{k}^{j+1}\right \vert +x_{j}}{2}+\dfrac{d-j}%
{4}\right)  \right \},
\end{multline*}
for $d\geq1$.
\end{theorem}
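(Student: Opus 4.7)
The plan is to mirror the proof of the Laguerre analogue, replacing the Laguerre orthogonality (\ref{ort-L}) by the Jacobi orthogonality (\ref{ort-J}) in the radial direction. Concretely, I would apply the multivariate Parseval identity (\ref{multi}) to the pair $g_{n,\mathbf{k}}(t,\boldsymbol{x};a_1,b_1,c_1,\beta_1,\mu_1,\gamma_1)$ and $g_{m,\mathbf{l}}(t,\boldsymbol{x};a_2,b_2,c_2,\beta_2,\mu_2,\gamma_2)$ defined in (\ref{jac-spec-func}), after imposing the parameter identifications
\begin{equation*}
\mu_1=\mu_2=a_1+a_2-\tfrac{1}{2},\quad \gamma_1=\gamma_2=c_1+c_2-1,\quad \beta_1=\beta_2=b_1+b_2-2(a_1+a_2),
\end{equation*}
which are exactly what is needed to turn the mixed inner product on the left-hand side into a product of Kronecker deltas.

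I would first handle the base case $d=1$. After the changes of variables $v=\tanh x_1$ on the spatial side and $u=(1+\tanh t)/2$ on the radial side, the Jacobian $\mathrm{d}t=\frac{\mathrm{d}u}{2u(1-u)}$ turns the double integral on the left of Parseval into a product of a Gegenbauer integral on $(-1,1)$ with weight $(1-v^2)^{a_1+a_2-1}$ and a Jacobi integral on $(0,1)$ with weight $u^{2k_1+|\mathbf{b}|+(l_1-k_1)-1}(1-u)^{|\mathbf{c}|-1}$, up to explicit powers of $2$. The chosen $\mu_i$ activate (\ref{ort}) and produce $h_{k_1}^{(a_1+a_2-1/2)}\delta_{k_1,l_1}$; once that delta collapses $l_1=k_1$, the chosen $\beta_i,\gamma_i$ activate (\ref{ort-J}) and produce $h_{n-k_1}^{(2k_1+|\mathbf{b}|-1,|\mathbf{c}|-1)}\delta_{n,m}$. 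Rewriting these two normalisations via (\ref{gnorm}) and the Jacobi $h_n^{(\alpha,\beta)}$ gives precisely the Gamma quotient
$\Gamma(n+k_1+|\mathbf{b}|)\Gamma(n-k_1+|\mathbf{c}|)\Gamma(k_1+b_1+c_1)\Gamma(k_1+b_2+c_2)/[(2n+|\mathbf{b}|+|\mathbf{c}|-1)\Gamma(n+k_1+|\mathbf{b}|+|\mathbf{c}|-1)]$
claimed in the statement, provided one uses the identity $\Gamma(k_1+b_1+c_1)\Gamma(k_1+b_2+c_2)$ for the overall Beta-constants produced by pairing the Gamma factors in $b_i\pm it/2$ and $c_i\pm it/2$.

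On the right of Parseval, I would insert (\ref{Jac-Fourier}) for each factor and conjugate the second. The $\Theta_j^1$ block, after the substitution $\xi_1\mapsto ix_1$ and the Hahn rewriting (\ref{hahn}), contributes the factor $p_{k_1}(-ix_1/2;\dots)$ of $B_{n,\mathbf{k}}^{(2)}$; the terminating $_3F_2$ in $\Xi$, again rewritten via (\ref{hahn}), contributes the remaining Hahn polynomial $p_{n-k_1}(it/2;\dots)$. Collecting pre-factors $2^{2(b+c-1)}$, $(2\pi)^{-2}$, the two Beta functions in $a_1\pm ix_1/2$ and the matching Gamma products in $b_i\pm it/2$, $c_i\pm it/2$ yields exactly the $(2\pi)^{2}\,2^{-2|\mathbf{a}|+3}\,h_{k_1}^{(a_1+a_2-1/2)}$ global constant.

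For general $d$, I would proceed by induction, exploiting the decomposition $g_{n,\mathbf{k}}(t,\boldsymbol{x};\dots)=\varphi(t)\,f_d(\boldsymbol{x};\mathbf{k},a,\mu)$ with $\varphi$ a purely $t$-dependent factor. The $t$-integral is unchanged and always contributes the Jacobi block computed in the base case; the spatial induction reduces to the one already carried out in \cite{233} via the relations (\ref{g1})--(\ref{g2}). The main obstacle I anticipate is the bookkeeping: verifying that the iterated Pochhammer symbols $(2|\mathbf{k}^{j+1}|+2|\mathbf{a}|+d-j-1)_{k_j}$, the Gamma factors $\Gamma(|\mathbf{k}^{j+1}|+2a_i+(d-j)/2)$, and the accumulated powers of $2$ collect exactly into the form in the statement, and in particular that the factorised structure (\ref{Norm}) of $h_{\mathbf{k}}^{(a_1+a_2-1/2)}$ is compatible with the cascade of Gegenbauer orthogonalities along the $d$ radial variables so that each $\delta_{k_j,l_j}$ arises with the announced coefficient.
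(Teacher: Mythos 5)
Your proposal is correct and follows essentially the same route as the paper: apply the multivariate Parseval identity to the pair $g_{n,\mathbf{k}}$, $g_{m,\mathbf{l}}$ from (\ref{jac-spec-func}) with the identifications $\mu_i=a_1+a_2-\tfrac12$, $\beta_i=b_1+b_2-2a_1-2a_2$, $\gamma_i=c_1+c_2-1$, reduce the left-hand side via $v=\tanh x_1$ and $u=(1+\tanh t)/2$ to the Gegenbauer and Jacobi orthogonality relations (\ref{ort}) and (\ref{ort-J}), identify $B_{n,\mathbf{k}}^{(d+1)}$ from (\ref{Jac-Fourier}) and (\ref{hahn}) on the right-hand side, and extend from $d=1$ by induction. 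The computational details you flag (Jacobian, shifted Jacobi weight, the normalisation $h_{n-k_1}^{(2k_1+|\mathbf{b}|-1,|\mathbf{c}|-1)}$, and the Gamma factors $\Gamma(k_1+b_i+c_i)$) all match the paper's calculation.
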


\begin{proof}
The proof can be done by using the induction method in $d$. Let's start with the case $d=1$. In
this case, we get the specific function from (\ref{jac-spec-func})%
\begin{multline*}
g_{n,k_{1}}\left(  t,x_{1};a_{1},b_{1},c_{1},\beta_{1},\mu_{1},\gamma_{1}\right)    \\
=\left(  1-\tanh^{2}x_{1}\right)  ^{a_{1}}\left(  1+\tanh t\right)  ^{b_{1}}\left(  1-\tanh t\right)  ^{c_{1}}J_{k_{1},n}^{\beta_{1}%
,\mu_{1},\gamma_{1}}\left(  \tau_{1},\tau_{2}\right) \\
=\left(  1-\tanh^{2}x_{1}\right)  ^{a_{1}}\left(  1+\tanh t\right)^{b_{1}}\left(  1-\tanh t\right)  ^{c_{1}}\  \\
 \times P_{n-k_{1}}^{\left(  2k_{1}+2\mu_{1}+\beta_{1},\gamma_{1}\right)
}\left(  -\tanh t\right)  \left(  \frac{1+\tanh t}{2}\right)  ^{k_{1}}%
C_{k_{1}}^{\left(  \mu_{1}\right)  }\left(  \tanh x_{1}\right)
\end{multline*}
where $\tau_{1}=\frac{1+\tanh t}{2}$ and $\tau_{2}=\left(  \frac{1+\tanh t}%
{2}\right)  \tanh x_{1}.$ By use of Parseval's identity%
\begin{multline*}
  4\pi^{2} {\displaystyle \int \limits_{-\infty}^{\infty}} {\displaystyle \int \limits_{-\infty}^{\infty}} g_{n,k_{1}}\left(  t,x_{1};a_{1},b_{1},c_{1},\beta_{1},\mu_{1},\gamma
_{1}\right)  g_{m,l_{1}}\left(  t,x_{1};a_{2},b_{2},c_{2},\beta_{2},\mu
_{2},\gamma_{2}\right)  dx_{1}dt\\
  =\int \limits_{-\infty}^{\infty}\int \limits_{-\infty}^{\infty}%
\mathcal{F}%
\left(  g_{n,k_{1}}\left(  t,x_{1};a_{1},b_{1},c_{1},\beta_{1},\mu_{1}%
,\gamma_{1}\right)  \right)  \overline{%
\mathcal{F}%
\left(  g_{m,l_{1}}\left(  t,x_{1};a_{2},b_{2},c_{2},\beta_{2},\mu_{2}%
,\gamma_{2}\right)  \right)  }d\xi_{1}d\xi_{2},
\end{multline*}
it follows%
\begin{multline*}
4\pi^{2} {\displaystyle \int \limits_{-\infty}^{\infty}} {\displaystyle \int \limits_{-\infty}^{\infty}} \left(  1-\tanh^{2}x_{1}\right)  ^{a_{1}+a_{2}}\left(  1+\tanh t\right) ^{b_{1}+b_{2}}\left(  1-\tanh t\right)^{c_{1}+c_{2}} \\
\times J_{k_{1},n}^{\beta_{1},\mu_{1},\gamma_{1}}\left(  \tau_{1},\tau_{2}\right)  J_{l_{1},m}^{\beta_{2},\mu_{2},\gamma_{2}}\left(  \tau_{1},\tau_{2}\right)  dx_{1}dt\\
  =4\pi^{2}%
{\displaystyle \int \limits_{-\infty}^{\infty}}
{\displaystyle \int \limits_{-\infty}^{\infty}}
\left(  1-\tanh^{2}x_{1}\right)  ^{a_{1}+a_{2}}\left(  1+\tanh t\right)
^{b_{1}+b_{2}}\left(  1-\tanh t\right)  ^{c_{1}+c_{2}}\left(  \frac{1+\tanh
t}{2}\right)  ^{k_{1}+l_{1}}\\
 \times P_{n-k_{1}}^{\left(  2k_{1}+2\mu_{1}+\beta_{1},\gamma_{1}\right)
}\left(  -\tanh t\right)  P_{m-l_{1}}^{\left(  2l_{1}+2\mu_{2}+\beta
_{2},\gamma_{2}\right)  }\left(  -\tanh t\right)  \\
\times C_{k_{1}}^{\left(  \mu
_{1}\right)  }\left(  \tanh x_{1}\right)  C_{l_{1}}^{\left(  \mu_{2}\right)
}\left(  \tanh x_{1}\right)  dx_{1}dt \\
  =\pi^{2}2^{b_{1}+b_{2}+c_{1}+c_{2}+1}{\displaystyle \int \limits_{0}^{1}}
u^{b_{1}+b_{2}+k_{1}+l_{1}-1}\left(  1-u\right)  ^{c_{1}+c_{2}-1}P_{n-k_{1}%
}^{\left(  2k_{1}+2\mu_{1}+\beta_{1},\gamma_{1}\right)  }\left(  1-2u\right) \\ \times
P_{m-l_{1}}^{\left(  2l_{1}+2\mu_{2}+\beta_{2},\gamma_{2}\right)  }\left(
1-2u\right)  du
{\displaystyle \int \limits_{-1}^{1}}
\left(  1-v^{2}\right)  ^{a_{1}+a_{2}-1}C_{k_{1}}^{\left(  \mu_{1}\right)
}\left(  v\right)  C_{l_{1}}^{\left(  \mu_{2}\right)  }\left(  v\right)  dv
\end{multline*}
\begin{multline*}
  =\frac{2^{2a_{1}+2a_{2}+b_{1}+b_{2}+c_{1}+c_{2}-4}\left(  2\mu_{1}\right)
_{k_{1}}\left(  2\mu_{2}\right)  _{l_{1}}\left(  2k_{1}+2\mu_{1}+\beta
_{1}+1\right)  _{n-k_{1}}\left(  2l_{1}+2\mu_{2}+\beta_{2}+1\right)
_{m-l_{1}}}{\left(  n-k_{1}\right)  !\left(  m-l_{1}\right)  !k_{1}%
!l_{1}!\Gamma \left(  2a_{1}\right)  \Gamma \left(  2a_{2}\right)  \Gamma \left(
k_{1}+b_{1}+c_{1}\right)  \Gamma \left(  l_{1}+b_{2}+c_{2}\right)  }\\
  \times%
{\displaystyle \int \limits_{-\infty}^{\infty}}
{\displaystyle \int \limits_{-\infty}^{\infty}}
\text{ }\Gamma \left(  a_{1}+\frac{i\xi_{1}}{2}\right)  \Gamma \left(
a_{1}-\frac{i\xi_{1}}{2}\right)  \Gamma \left(  c_{1}+\frac{i\xi_{2}}%
{2}\right)  \Gamma \left(  b_{1}-\frac{i\xi_{2}}{2}\right)  ~\left(
b_{1}-\frac{i\xi_{2}}{2}\right)  _{k_{1}}\\
  \times \overline{\Gamma \left(  a_{2}+\frac{i\xi_{1}}{2}\right)
\Gamma \left(  a_{2}-\frac{i\xi_{1}}{2}\right)  }\overline{\Gamma \left(
c_{2}+\frac{i\xi_{2}}{2}\right)  \Gamma \left(  b_{2}-\frac{i\xi_{2}}%
{2}\right)  \left(  b_{2}-\frac{i\xi_{2}}{2}\right)  _{l_{1}}}\\
  \times~_{3}F_{2}\left(  \left.
\genfrac{}{}{0pt}{0}{-k_{1},\ k_{1}+2\mu_{1},\ a_{1}+\frac{i\xi_{1}}%
{2}}{2a_{1},\  \mu_{1}+1/2}%
~\right \vert ~1\right)  ~\overline{_{3}F_{2}\left(  \left.
\genfrac{}{}{0pt}{0}{-l_{1},\ l_{1}+2\mu_{2},\ a_{2}+\frac{i\xi_{1}}%
{2}}{2a_{2},\  \mu_{2}+1/2}%
~\right \vert ~1\right)  }\\
  \times~_{3}F_{2}\left(  \left.
\genfrac{}{}{0pt}{0}{-n+k_{1},\ n+k_{1}+b_{1}+c_{1}-1,~k_{1}+b_{1}-\frac
{i\xi_{2}}{2}}{2k_{1}+b_{1},~k_{1}+b_{1}+c_{1}}%
~\right \vert ~1\right)  ~\\
  \times \overline{_{3}F_{2}\left(  \left.
\genfrac{}{}{0pt}{0}{-m+l_{1},\ m+l_{1}+b_{2}+c_{2}-1,~l_{1}+b_{2}-\frac
{i\xi_{2}}{2}}{2l_{1}+b_{2},~l_{1}+b_{2}+c_{2}}%
~\right \vert ~1\right)  }d\xi_{1}d\xi_{2}.
\end{multline*}
By assuming
\begin{align*}
\mu_{1}  &  =\mu_{2}=a_{1}+a_{2}-\frac{1}{2}\\
\beta_{1}  &  =\beta_{2}=b_{1}+b_{2}-2a_{1}-2a_{2}\\
\gamma_{1}  &  =\gamma_{2}=c_{1}+c_{2}-1
\end{align*}
and considering the orthogonality relations of (\ref{ort}) and (\ref{ort-J}), it is seen that the special function%
\begin{multline*}
\ B_{n,k_{1}}^{\left(  2\right)  }\left(  t,x_{1};a_{1},a_{2},b_{1}%
,b_{2},c_{1},c_{2}\right)    =\  \Gamma \left(  a_{1}-\frac{x_{1}}{2}\right)
\Gamma \left(  a_{1}+\frac{x_{1}}{2}\right)  \left(  b_{1}-\frac{t}{2}\right)
_{k_{1}}\\
  \times~_{3}F_{2}\left(  \left.
\genfrac{}{}{0pt}{0}{-k_{1},\ k_{1}+2\left(  a_{1}+a_{2}\right)
-1,\ a_{1}+\frac{x_{1}}{2}}{a_{1}+a_{2},\ 2a_{1}}%
~\right \vert ~1\right) \\
 \times~_{3}F_{2}\left(  \left.
\genfrac{}{}{0pt}{0}{-n+k_{1},\ n+k_{1}+b_{1}+b_{2}+c_{1}+c_{2}-1,~k_{1}%
+b_{1}-\frac{t}{2}}{2k_{1}+b_{1}+b_{2},~k_{1}+b_{1}+c_{1}}%
~\right \vert ~1\right)
\end{multline*}
namely,%
\begin{multline*}
 B_{n,k_{1}}^{\left(  2\right)  }\left(  t,x_{1};a_{1},a_{2},b_{1}%
,b_{2},c_{1},c_{2}\right)     \\ =\frac{\left(  n-k_{1}\right)  !k_{1}!i^{-n}%
}{\left(  2a_{1}\right)  _{k_{1}}\left(  a_{1}+a_{2}\right)  _{k_{1}}\left(
2k_{1}+b_{1}+b_{2}\right)  _{n-k_{1}}\left(  k_{1}+b_{1}+c_{1}\right)
_{n-k_{1}}}\\
  \times~\Gamma \left(  a_{1}-\frac{x_{1}}{2}\right)  \Gamma \left(
a_{1}+\frac{x_{1}}{2}\right)  \left(  b_{1}-\frac{t}{2}\right)  _{k_{1}}\\
  \times p_{k_{1}}\left(  \frac{-ix_{1}}{2};a_{1},a_{2},a_{2},a_{1}\right)
p_{n-k_{1}}\left(  \frac{it}{2};b_{1}+k_{1},c_{2},b_{2}+k_{1},c_{1}\right)
\end{multline*}
satisfies the relation%
\begin{multline*}
  \int \limits_{-\infty}^{\infty}\int \limits_{-\infty}^{\infty}\Gamma \left(
b_{1}-\frac{it}{2}\right)  \Gamma \left(  b_{2}+\frac{it}{2}\right)
\Gamma \left(  c_{1}+\frac{it}{2}\right)  \Gamma \left(  c_{2}-\frac{it}%
{2}\right) \\
\times  \ B_{n,k_{1}}^{\left(  2\right)  }\left(  it,ix_{1};a_{1},a_{2},b_{1}%
,b_{2},c_{1},c_{2}\right)  \ B_{m,l_{1}}^{\left(  2\right)  }\left(
-it,-ix_{1};a_{2},a_{1},b_{2},b_{1},c_{2},c_{1}\right)  dx_{1}dt\\
  =\frac{\pi^{2}2^{-2a_{1}-2a_{2}+5}h_{k_{1}}^{\left(  a_{1}+a_{2}-\frac
{1}{2}\right)  }\Gamma \left(  n+k_{1}+b_{1}+b_{2}\right)  \Gamma \left(
n-k_{1}+c_{1}+c_{2}\right)  }{\left(  \left(  2k_{1}+b_{1}+b_{2}\right)
_{n-k_{1}}\right)  ^{2}\left(  \left(  2a_{1}+2a_{2}-1\right)  _{k_{1}%
}\right)  ^{2}}\\
 \times \frac{\left(  k_{1}!\right)  ^{2}\left(  n-k_{1}\right)
!\Gamma \left(  k_{1}+b_{1}+c_{1}\right)  \Gamma \left(  k_{1}+b_{2}%
+c_{2}\right)  \Gamma \left(  2a_{1}\right)  \Gamma \left(  2a_{2}\right)
}{\left(  2n+b_{1}+b_{2}+c_{1}+c_{2}-1\right)  \Gamma \left(  n+k_{1}%
+b_{1}+b_{2}+c_{1}+c_{2}-1\right)  }\delta_{k_{1},l_{1}}\delta_{n,m}%
\end{multline*}
where $h_{k_{1}}^{\left(  a_{1}+a_{2}-\frac{1}{2}\right)  }$ is given by
(\ref{gnorm}). Similarly, if we substitute (\ref{jac-spec-func}) and
(\ref{Jac-Fourier}) in the Parseval's identity (\ref{multi}), the necessary
calculations give the desired result.
\end{proof}

\textbf{Authors' contributions}
Both authors contributed equally to this work. Both authors have read and approved the final manuscript.\\

\textbf{Funding} The work of the second author has been partially supported by a research grant of the Agencia Estatal de Investigaci\'on, Spain, Grant PID2020- 113275GB-I00 funded by MCIN/AEI/10.13039/501100011033 and by ``ERDF A way of making Europe'', by the ``European Union'', as well as by grant ED431B 2024/42 (Xunta de Galicia). \\

\textbf{Data availability}
Data sharing is not applicable to this article as no data sets were generated or analyzed during the current study.\\

\section*{Declarations}
\textbf{Conflict of interest} The authors declare no competing interests.\\

\textbf{Ethical Approval} Not applicable.

\end{document}